\newcommand{\dblq}{{/\!/}}
\newtheorem{theorem}{Theorem}[section]
\newtheorem{proposition}[theorem]{Proposition}
\newtheorem{lemma}[theorem]{Lemma}
\theoremstyle{definition}
\newtheorem{definition}[theorem]{Definition}
\newtheorem{remark}[theorem]{Remark}
\newtheorem{conjecture/question}[theorem]{Conjecture/Question}
\newtheorem{remark/definition}[theorem]{Remark/Definition}
\newtheorem{terminology/notation}[theorem]{Terminology/Notation}
\def\PP{{\textbf P}}
\def\OO{\mathcal{O}}
\def\cA{\mathcal{A}}
\def\F{\mathcal{F}}
\def\P{\mathcal{P}}
\def\E{\mathcal{E}}
\def\I{\mathcal{I}}
\def\cM{\mathcal{M}}
\def\cR{\mathcal{R}}
\def\ra{\mathfrak{Rat}}
\def\rr{\overline{\mathcal{R}}}
\def\Pic0{{\rm Pic}^0(X)}
\def\mm{\overline{\mathcal{M}}}
\begin{document}
\title{Prym varieties and moduli of polarized Nikulin surfaces}

\author[G. Farkas]{Gavril Farkas}

\address{Humboldt-Universit\"at zu Berlin, Institut f\"ur Mathematik,  Unter den Linden 6
\hfill \newline\texttt{}
 \indent 10099 Berlin, Germany} \email{{\tt farkas@math.hu-berlin.de}}
\thanks{}

\author[A. Verra]{Alessandro Verra}
\address{Universit\'a Roma Tre, Dipartimento di Matematica, Largo San Leonardo Murialdo \hfill
 \newline \indent 1-00146 Roma, Italy}
 \email{{\tt
verra@mat.uniroma3.it}}

\begin{abstract} We present a structure theorem for the moduli space $\cR_7$ of Prym curves of genus $7$ as a projective bundle over the moduli space of $7$-nodal rational curves. The existence of this parametrization implies the unirationality of $\cR_7$ and that of the moduli space of Nikulin surfaces of genus $7$, as well as the rationality of the moduli space of Nikulin surfaces of genus $7$ with a distinguished line. Using the results in genus $7$, we then establish that $\cR_8$ is uniruled.
\end{abstract}

\maketitle
\vskip 6pt

\section{Introduction}
A polarized Nikulin surface of genus $g$ is a smooth polarized $K3$ surface $(S, \mathfrak{c})$, where $\mathfrak{c}\in \mbox{Pic}(S)$ with $\mathfrak{c}^2=2g-2$, equipped with a double cover
$f:\widetilde{S}\rightarrow S$ branched along disjoint rational curves $N_1, \ldots, N_8\subset S$, such that $\mathfrak{c}\cdot N_i=0$ for $i=1, \ldots, 8$. Denoting by $e\in \mbox{Pic}(S)$ the class defined by the equality $e^{\otimes 2}=\OO_S(\sum_{i=1}^8 N_i)$, one forms the \emph{Nikulin lattice}
$$\mathfrak{N}:=\Bigl\langle \OO_S(N_1), \ldots, \OO_S(N_8), e\Bigr\rangle$$
and obtains a primitive embedding $j:\Lambda_g:=\mathbb Z\cdot [\mathfrak{c}]\oplus \mathfrak{N}\hookrightarrow \mbox{Pic}(S)$. Nikulin surfaces of genus $g$ form an irreducible $11$-dimensional moduli space $\F_g^{\mathfrak{N}}$ which has been  studied  from a lattice-theoretic point of view in  \cite{Do1} and \cite{vGS}.
The connection between $\F_g^{\mathfrak{N}}$ and the moduli space $\cR_g$ of pairs $[C, \eta]$, where $C$ is a curve of genus $g$ and $\eta\in \mbox{Pic}^0(C)[2]$ is a non-trivial $2$-torsion point, has been pointed out in \cite{FV} and used to describe $\cR_g$ in small genus. Over $\F_g^{\mathfrak{N}}$ one considers the open set in a tautological $\PP^g$-bundle
$$\mathcal{P}_g^{\mathfrak{N}}:=\Bigl\{\bigl[S, j:\Lambda_g\hookrightarrow \mbox{Pic}(S), C\bigr]: C\in |\mathfrak{c}| \mbox{ is a smooth curve of genus  } g\Bigr\},$$
which is endowed with the two projection maps
$$\xymatrix{
  & \mathcal{P}_g^{\mathfrak{N}} \ar[dl]_{p_g} \ar[dr]^{\chi_g} & \\
   \F_g^{\mathfrak{N}} & & \cR_{g}       \\
                 }$$
defined by $p_g([S, j, C]):=[S,j]$ and $\chi_g([S, j, C]):=[C, e_C:=e\otimes \OO_C]$ respectively.

\vskip 4pt

Observe that $\mbox{dim}(\mathcal{P}_7^{\mathfrak{N}})=\mbox{dim}(\cR_7)=18$. The map $\chi_7:\mathcal{P}_7^{\mathfrak{N}}\dashrightarrow \cR_7$ is a birational isomorphism, precisely  $\cR_7$ is birational to a Zariski locally trivial $\PP^7$-bundle over $\F_7^{\mathfrak{N}}$. This is reminiscent of Mukai's well-known result \cite{Mu}:  The moduli space $\cM_{11}$ of curves of genus $11$ is birational to a projective bundle over the moduli space $\F_{11}$ of polarized $K3$ surfaces of genus $11$. Note that $\cM_{11}$ and $\cR_7$ are the only known examples of moduli spaces of curves admitting a non-trivial fibre bundle structure over a moduli space of polarized $K3$ surfaces. Here we describe the structure of $\F_7^{\mathfrak{N}}$:
\begin{theorem}\label{unir7}
The Nikulin moduli space $\F_7^{\mathfrak{N}}$ is unirational. The Prym moduli space $\cR_7$ is birationally isomorphic to a $\PP^7$-bundle over $\F_7^{\mathfrak{N}}$. It follows that $\cR_7$ is unirational as well.
\end{theorem}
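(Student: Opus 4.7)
The plan is to derive the three assertions of the theorem from the structural result announced in the abstract, namely that $\cR_7$ is birationally a projective bundle over the moduli space of $7$-nodal rational curves of arithmetic genus $7$, which I will denote $\cN_7$.

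I would first dispose of the $\PP^7$-bundle assertion. The map $p_7:\mathcal{P}_7^{\mathfrak{N}}\to \F_7^{\mathfrak{N}}$ is by construction a Zariski-locally-trivial $\PP^7$-bundle: its fibre over $[S,j]$ is the linear system $|\mathfrak{c}|$, which has dimension $7$ by Riemann--Roch on the K3 surface $S$ with $\mathfrak{c}^2=12$. Hence the claim reduces to the birationality of $\chi_7$. Since $\dim \mathcal{P}_7^{\mathfrak{N}}=\dim \cR_7=18$, it suffices to verify generic injectivity. I would do this by an inverse construction: from a generic Prym pair $[C,\eta]\in \cR_7$, reconstruct $(S,j)$ by projectively embedding $C$ via $\mathfrak{c}|_C$, reading off from $\eta$ the configuration of disjoint rational curves $N_1,\dots,N_8$ together with the distinguished class $e$, and recognising $S$ as the unique minimal K3 completion carrying this Nikulin configuration.

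For the unirationality of $\F_7^{\mathfrak{N}}$ I would invoke the structure theorem. The base $\cN_7$ is a quotient of an open subset of $\cM_{0,14}$ by $S_2 \wr S_7$ --- a $14$-pointed rational curve is reinterpreted as $7$ unordered pairs, each smoothing to a node --- and is therefore rational of dimension $11$. The projective bundle structure $\cR_7\dashrightarrow \cN_7$ immediately yields unirationality of $\cR_7$, which also proves the third assertion of the theorem. Composing with the dominant map $p_7\circ \chi_7^{-1}:\cR_7\dashrightarrow \F_7^{\mathfrak{N}}$ exhibits $\F_7^{\mathfrak{N}}$ as the image of a unirational variety under a dominant rational map, hence unirational.

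The principal obstacle is the structure theorem itself: one must canonically attach to a generic Prym pair $[C,\eta]$ of genus $7$ a $7$-nodal rational curve, and identify the resulting fibration $\cR_7\dashrightarrow \cN_7$ with the structure morphism of a projective bundle. The birationality of $\chi_7$ is a secondary but nontrivial K3-theoretic step, requiring a delicate reconstruction of a Nikulin K3 from a projective model of one of its Prym hyperplane sections.
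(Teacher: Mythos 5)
There is a genuine gap, and it stems from a misreading of the structural result. You take as given that $\cR_7$ is birationally a projective bundle over the moduli space $\ra_7$ of $7$-nodal rational curves, and you then deduce all three assertions from this. But that is not what the paper proves, and in fact the paper explicitly leaves open whether any modular $\PP^7$-bundle over $\ra_7$ covers $\cR_7$ (see the Remark following Theorem~\ref{rat8}). What is actually established is weaker and asymmetric: $\cR_7$ is a $\PP^7$-bundle over $\F_7^{\mathfrak{N}}$ via $\chi_7^{-1}$ followed by $p_7$, while $\ra_7$ is birational not to $\F_7^{\mathfrak{N}}$ but to the degree-$8$ \emph{cover} $\widehat{\F}_7^{\mathfrak{N}}\to\F_7^{\mathfrak{N}}$ obtained by marking one line $N_8$ in the Nikulin configuration (Theorem~\ref{m14}). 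Consequently, the logical order of the paper's argument is the reverse of yours: one first shows $\ra_7$ is dominated by the rational variety $\mm_{0,14}$, hence $\widehat{\F}_7^{\mathfrak{N}}$ is unirational, hence the finite quotient $\F_7^{\mathfrak{N}}$ is unirational, and only then does the $\PP^7$-bundle structure propagate unirationality up to $\cR_7$. Your route, which first establishes unirationality of $\cR_7$ and then pushes it down to $\F_7^{\mathfrak{N}}$, would be valid if the bundle structure over $\ra_7$ existed, but it does not (at least not in the paper).

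Two secondary slips. First, in your inverse construction of $\chi_7$ you propose to embed $C$ via $\mathfrak{c}|_C$; by adjunction on the K3 this is $K_C$, the canonical bundle, which gives the canonical model $C\hookrightarrow\PP^6$. The paper instead uses the \emph{Prym-canonical} embedding $\phi_{K_C\otimes\eta}:C\hookrightarrow\PP^5$ (i.e.\ restriction of $H=\mathfrak{c}-e$, which is the line bundle that is very ample on a minimal Nikulin surface), and recovers $S$ as the base locus $\mathrm{Bs}\,|\I_{C/\PP^5}(2)|$, a smooth complete intersection of three quadrics for generic $[C,\eta]$ (cf.\ \cite{FV}, Prop.~2.3). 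With $\mathfrak{c}$ itself the eight curves $N_i$ are contracted, so one gets a nodal model and the argument is not the same. Second, $\ra_7$ being a finite quotient of the rational $\mm_{0,14}$ gives you \emph{unirationality}, not rationality; quotients of rational varieties by finite groups need not be rational. For Theorem~\ref{unir7} unirationality is all you need, so this slip is harmless here, but the rationality of $\ra_7\cong\widehat{\F}_7^{\mathfrak{N}}$ is a separate theorem in the paper, resting on the $\PP^5$-fibration over $U^2_7$ and Katsylo's rationality of the latter.
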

It is well-known that $\cR_g$ is unirational for $g\leq 6$, see \cite{Do}, \cite{ILS}, \cite{V}, and even rational for $g\leq 4$, see \cite{Do2}, \cite{Cat}. On the other hand, the Deligne-Mumford moduli space $\rr_g$ of stable Prym curves of genus $g$ is a variety of general type for $g\geq 14$, whereas $\mbox{kod}(\rr_{12})\geq 0$, see \cite{FL} for the cases $g\neq 15$ and \cite{Br} for the case $g=15$. Nothing seems to be known about the Kodaira dimension of $\rr_g$, for $g=9, 10, 11$.

\vskip 3pt

We now discuss the structure of $\F_7^{\mathfrak{N}}$. For each positive $g$, we denote by $$\ra_g:=\mm_{0,2g}/\mathbb Z_2^{\oplus g}\rtimes \mathfrak{S}_g$$ the moduli space of $g$-nodal stable rational curves. The action of the group $\mathbb Z_2^{\oplus g}$ is given by permuting the marked points labeled by $\{1, 2\}, \ldots, \{2g-1,2g\}$ respectively, while the symmetric group $\mathfrak{S}_g$ acts by permuting the $2$-cycles $(1,2), \ldots, (2g-1,2g)$ respectively. The variety $\ra_g$, viewed as a subvariety of $\mm_g$, has been studied by Castelnuovo \cite{Cas} at the end of the 19th century in the course of his famous attempt to prove the Brill-Noether Theorem, as well as much more recently, for instance  in \cite{GKM} \footnote{Unfortunately, in \cite{GKM} the notation $\rr_g$ (reserved for the Prym moduli space) is proposed for what we denote in this paper by $\ra_g$.}, in the context of determining the ample cone of $\mm_g$.
Using the identification $\mbox{Sym}^2(\PP^1)\cong \PP^2$, we obtain a birational isomorphism
$$\ra_g\cong \mbox{Hilb}^g(\PP^2) \dblq PGL(2),$$
where $PGL(2)\subset PGL(3)$ is regarded as the group of projective automorphisms of $\PP^2$ preserving the image of a fixed smooth conic in $\PP^2$.

\vskip 5pt

Let us fix once and for all a smooth rational quintic curve $R\subset \PP^5$. For  general points $x_1, y_1, \ldots, x_{7}, y_7\in R$, we note that $\bigl[R, (x_1+y_1)+ \cdots +(x_{7}+y_{7})\bigr]\in \ra_7$. We denote by
$$N_1:=\langle x_1, y_1\rangle, \ldots, N_7:=\langle x_{7}, y_{7}\rangle \in G(2, 6),$$
the corresponding bisecant lines to $R$ and observe that $C:=R\cup N_1\cup \ldots \cup N_7$ is a nodal curve of genus $7$ and degree $12$ in $\PP^5$.
By writing down the Mayer-Vietoris sequence
for $C$, we find the following identifications:
$$H^0(C, \OO_C(1))\cong H^0(\OO_R(1)) \ \mbox{ and } \ H^0(C, \OO_C(2))\cong H^0(\OO_R(2))\oplus \Bigl(\oplus_{i=1}^7 H^0(\OO_{N_i})\Bigr).$$
It can easily be checked that the base locus
$$S:=\mathrm{Bs}\ \bigl|\I_{C/\PP^5}(2)\bigr|$$ is a
smooth $K3$ surface which is a complete intersection of three quadrics in $\PP^5$. Obviously, $S$ is equipped with the seven lines $N_1, \ldots, N_7$. In fact, $S$ carries an eight line as well! If $H\in |\OO_S(1)|$ is a hyperplane section, after setting
$$N_8:=2R+N_1+\cdots+N_7-2H\in \mbox{Div}(S),$$
we compute that $N_8^2=-2, N_8\cdot H=1$ and $N_8\cdot N_i=0$, for $i=1, \ldots, 7$. Therefore $N_8$ is equivalent to an effective divisor on $S$, which is embedded in $\PP^5$ as a line by the linear system $|\OO_S(1)|$. Furthermore,
$$N_1+\cdots+N_8=2(R+N_1+\cdots+N_7-H)\in \mathrm{Pic}(S),$$
hence by denoting $e:=R+N_1+\cdots+N_7-H$, we obtain an embedding $\mathfrak{N}\hookrightarrow \mbox{Pic}(S)$. Moreover $C\cdot N_i=0$ for $i=1, \ldots, 8$ and we may view $\Lambda_7\hookrightarrow \mbox{Pic}(S)$. In this way $S$ becomes a Nikulin surface of genus $7$.

\vskip 5pt

We introduce the moduli space $\widehat{\F}_g^{\mathfrak{N}}$ of \emph{decorated} Nikulin surfaces consisting of polarized Nikulin surfaces $\bigl[S, j:\Lambda_g\hookrightarrow \mbox{Pic}(S)\bigr]$
of genus $g$, together with a distinguished line $N_8\subset S$ viewed as a component of the branch divisor of the double covering $f:\widetilde{S}\rightarrow S$. There is an obvious forgetful map $\widehat{\F}_g^{\mathfrak{N}}\rightarrow \F_g^{\mathfrak{N}}$ of degree $8$. Having specified $N_8\subset S$, we can also specify the divisor $N_1+\cdots+N_7\subset S$ such that $e^{\otimes 2}=\OO_S(N_1+\cdots+N_7+N_8)$. We summarize what has been discussed so far and refer to Section 2 for further details:

\begin{theorem}\label{m14}
The rational map $\varphi:\ra_7\dashrightarrow \widehat{\F}_7^{\mathfrak{N}}$ given by
$$\varphi\Bigl(\bigl[R, (x_1+y_1)+ \cdots +(x_7+y_7)\bigr]\Bigr):=\Bigl[S, \OO_S (R+N_1+\cdots+N_7), N_8\Bigr]$$
is a birational isomorphism.
\end{theorem}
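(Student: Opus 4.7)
The plan is to prove birationality by exhibiting an explicit rational inverse $\psi:\widehat{\F}_7^{\mathfrak{N}} \dashrightarrow \ra_7$. Both spaces are irreducible of dimension $11$: $\ra_7$ is a finite quotient of $\mm_{0,14}$, while $\widehat{\F}_7^{\mathfrak{N}}$ is finite of degree $8$ over the irreducible $11$-dimensional moduli space $\F_7^{\mathfrak{N}}$. Hence, once $\psi$ is defined and $\psi \circ \varphi = \mathrm{id}$ is verified on a dense open subset, the map $\varphi$ is automatically a birational isomorphism.

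To construct $\psi$, I start from a general decorated Nikulin surface $[S,\, j:\Lambda_7 \hookrightarrow \mbox{Pic}(S),\, N_8]$ and introduce the divisor classes
$$H := \mathfrak{c} - e \quad \textrm{and} \quad R := \mathfrak{c} - (N_1 + \cdots + N_7),$$
where $N_1, \ldots, N_7$ are the remaining seven $(-2)$-curves of the Nikulin data, labeled only up to permutation by $\mathfrak{S}_7$. The standard intersection pairing on $\Lambda_7$ (where $e\cdot N_i = -1$ and $e^2 = -4$) yields $H^2 = 8$, $H \cdot N_i = 1$ for all $i$, $R^2 = -2$, $R \cdot H = 5$, $R \cdot N_i = 2$ for $i \le 7$, and $R \cdot N_8 = 0$. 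The crucial geometric step is to show that for a generic Nikulin surface $|H|$ is very ample and realizes $S \subset \PP^5$ as a smooth complete intersection of three quadrics with the $N_i$ as lines, while $|R|$ consists of a unique smooth rational normal quintic $R \subset S$ meeting each $N_i$ ($i \le 7$) transversally in a pair of distinct points $\{x_i,y_i\}$. One then sets
$$\psi\bigl([S, j, N_8]\bigr) := \Bigl[R,\, (x_1+y_1)+\cdots+(x_7+y_7)\Bigr] \in \ra_7,$$
where the $\mathfrak{S}_7$-quotient in $\ra_7$ precisely absorbs the labeling ambiguity of the $N_i$.

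The identity $\psi \circ \varphi = \mathrm{id}$ is then transparent from the construction of $\varphi$ in the excerpt: by construction $\mathfrak{c}$ is the class of $R + N_1+\cdots+N_7$ on $S$, so the recipe defining $\psi$ recovers the original quintic as the residual $\mathfrak{c} - \sum_{i\le 7} N_i$, and the marked points as its intersections with the bisecant lines. By the equality of dimensions and irreducibility of both sides, this suffices.

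The main obstacle is to establish the geometric assertion underlying the definition of $\psi$: namely that for a generic $[S,j,N_8]$ the class $H = \mathfrak{c} - e$ is very ample with image a smooth complete intersection of three quadrics in $\PP^5$ carrying the $N_i$ as lines, and that $|R|$ is a pencil of length one consisting of a smooth rational quintic transverse to the seven distinguished bisecants. I would verify this by a semicontinuity / deformation argument anchored at the explicit Nikulin surface $S = \mathrm{Bs}\,|\I_{C/\PP^5}(2)|$ constructed in the excerpt, where the required very ampleness, smoothness and transversality are manifest by construction, combined with the standard lattice-theoretic fact that $\mbox{Pic}(S) = \Lambda_7$ for a generic $[S,j] \in \F_7^{\mathfrak{N}}$, which rules out the appearance of unwanted $(-2)$-classes that could obstruct very ampleness of $|H|$ or irreducibility of the unique member of $|R|$.
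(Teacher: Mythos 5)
Your proposal follows essentially the same route as the paper: define $\psi$ by reconstructing the quintic $R \equiv \mathfrak{c}-(N_1+\cdots+N_7)$ together with the seven degree-$2$ cycles $R\cdot N_i$, verify the intersection numbers $R^2=-2$, $R\cdot H=5$, $R\cdot N_i=2$, rely on minimality of $\mathrm{Pic}(S)=\Lambda_7$ to control $|R|$, and check $\psi\circ\varphi=\mathrm{id}$. The lattice-theoretic input you invoke (absence of unwanted $(-2)$-classes obstructing irreducibility of the member of $|R|$) is precisely the content of the paper's Proposition~\ref{vanish}, where this is carried out as an explicit diophantine computation in $\Lambda_7$. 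So the skeleton matches.

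There is one place where your argument is hand-waved rather than proved, and it is not harmless. You claim that at the anchor point given by the explicit construction $S=\mathrm{Bs}\,|\I_{C/\PP^5}(2)|$ from a quintic and seven general bisecants, ``the required very ampleness, smoothness and transversality are manifest by construction.'' Smoothness of this base locus is \emph{not} manifest and is exactly what the paper has to work to prove: the relevant Proposition first uses Proposition~\ref{vanish} to show $\psi$ is regular on a dense open of every component of $\widehat{\F}_7^{\mathfrak{N}}$, then observes that for a point $[R,\sum R\cdot N_i]$ already in $\mathrm{Im}(\psi)$ the reconstructed base locus is automatically the original smooth $K3$, and only then transports this to a general point via the irreducibility of $\mathfrak{Rat}_7$. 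In other words, the paper's anchor points are in $\mathrm{Im}(\psi)$, where smoothness comes for free; your proposed anchor is the abstract bisecant construction, where smoothness is the thing to be shown. If you replace ``manifest by construction'' with the observation that $\mathrm{Im}(\psi)$ furnishes anchor points where the base locus is tautologically the original smooth surface, and then use irreducibility of $\mathfrak{Rat}_7$ for the semicontinuity step, your argument closes the gap and coincides with the paper's.
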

A construction of the inverse map $\varphi^{-1}$ using the geometry of Prym canonical curves of genus $7$ is presented in Section 2.
The moduli space $\ra_g$ is related to the configuration space
$$U_g^2:=\mbox{Hilb}^g(\PP^2)\dblq PGL(3)$$ of $g$ unordered points in the plane. Using the isomorphism $PGL(3)/PGL(2)\cong \PP^5$, we observe in Section 2 that there exists a
(locally trivial) $\PP^5$-bundle structure $\ra_g\dashrightarrow U^2_g$. In particular $\ra_g$ is rational whenever $U^2_g$ is. Since the rationality of $U^2_7$ has been established by Katsylo \cite{Ka} (see also \cite{Bo}), we are led to the following result:

\begin{theorem}\label{rat8}
The moduli space $\widehat{\F}_7^{\mathfrak N}$ of decorated Nikulin surfaces of genus $7$ is rational.
\end{theorem}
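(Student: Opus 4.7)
The strategy is to concatenate three results that are already in place before the statement: Theorem \ref{m14}, which produces a birational isomorphism $\varphi:\ra_7\dashrightarrow \widehat{\F}_7^{\mathfrak N}$; the (Zariski) locally trivial $\PP^5$-bundle structure $\pi:\ra_g\dashrightarrow U_g^2$ described in the paragraph immediately preceding the statement; and Katsylo's theorem \cite{Ka}, which asserts that the configuration space $U_7^2=\mathrm{Hilb}^7(\PP^2)\dblq PGL(3)$ is rational.

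First, I would recall that the map $\pi$ arises from the inclusion $PGL(2)\subset PGL(3)$ via the identifications of $\ra_g$ and $U_g^2$ with the respective GIT quotients of $\mathrm{Hilb}^g(\PP^2)$ by $PGL(2)$ and $PGL(3)$; its general fibre is the homogeneous space $PGL(3)/PGL(2)\cong \PP^5$ (open in the $\PP^5$ of smooth conics). Because $\pi$ is Zariski locally trivial, $\ra_7$ is birational to $U_7^2\times \PP^5$, and in particular is rational as soon as $U_7^2$ is.

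Second, I would invoke Katsylo's rationality of $U_7^2$; combined with the product description above, this immediately yields the rationality of $\ra_7$. Finally, by Theorem \ref{m14} the moduli space $\widehat{\F}_7^{\mathfrak N}$ is birational to $\ra_7$, and rationality transports along this birational isomorphism, completing the proof.

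Since all of the substantive ingredients (Theorem \ref{m14}, the bundle structure $\pi$, and the rationality of $U_7^2$) have already been established or quoted, there is no genuine obstacle in this final assembly. The only point meriting a word of justification is the passage from the Zariski-local triviality of $\pi$ to the birational product $\ra_7\sim U_7^2\times \PP^5$, which is standard: a Zariski locally trivial $\PP^n$-bundle over a rational base is rational, for example because it admits a rational section and any such bundle is birationally trivial over a dense open of the base.
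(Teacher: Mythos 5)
Your proof is correct and follows essentially the same route the paper takes: combine Theorem \ref{m14} (birationality $\widehat{\F}_7^{\mathfrak N}\cong \ra_7$), the locally trivial $\PP^5$-bundle $\ra_7\dashrightarrow U_7^2$ (established in Proposition \ref{descent}), and the rationality of $U_7^2$ (Katsylo, also in B\"ohning). The paper's actual proof is a two-line assembly of exactly these ingredients, so no further comment is needed.
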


Putting together Theorems \ref{m14} and \ref{rat8}, we conclude that  there exists a dominant rational map $\PP^{18}\dashrightarrow \cR_7$ of degree $8$. We are not aware of any dominant map from a rational variety to $\cR_7$ of degree smaller than $8$. It would be very interesting to know whether $\cR_7$ itself is a rational variety. We recall that although $\cM_g$ is known to be rational for $g\leq 6$ (see \cite{Bo} and the references therein), the rationality of $\cM_7$ is an open problem.

\vskip 3pt

We sum up the construction described above in the following commutative diagram:
$$
     \xymatrix{
         \mm_{0,14} \ar[r]^{(2^7\cdot 7!):1} \ar@{-->}[d]_{} & \ra_{7} \ar@{-->}[d]_{\cong} &  \\
          \F_7^{\mathfrak{N}}        & \widehat{\F}_7^{\mathfrak{N}} \ar[l]^{8:1}  \ar[r]^{\PP^5} & U^2_7\\}
$$

The concrete geometry of $\cR_7$ by means of polarized Nikulin surfaces has direct consequences concerning the Kodaira dimension of $\rr_8$. The projective bundle structure of $\cR_7$ over $\F_7^{\mathfrak{N}}$ can be lifted to a boundary divisor of $\rr_8$. Denoting by $\pi:\rr_g\rightarrow \mm_g$ the map forgetting the Prym structure, one has the  formula
$$
\pi^*(\delta_0)=\delta_0^{'}+\delta_0^{''}+2\delta_{0}^{\mathrm{ram}}\in CH^1(\rr_g),
$$
where $\delta_0^{'}:=[\Delta_0^{'}], \, \delta_0^{''}:=[\Delta_0^{''}]$, and $\delta_0^{\mathrm{ram}}:=[\Delta_0^{\mathrm{ram}}]$ are boundary divisor classes on $\rr_g$ whose meaning will be recalled in Section 3. Note that up to a $\mathbb Z_2$-factor, a general point of $\Delta_0^{'}$ corresponds to a $2$-pointed Prym curve of genus $7$, for which we apply our Theorem \ref{unir7}. We establish the following result:

\begin{theorem}\label{r8}
The moduli space $\rr_8$ is uniruled.
\end{theorem}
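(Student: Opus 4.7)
The plan uses the boundary divisor $\Delta_0' \subset \rr_8$ as a bridge between Theorem \ref{unir7} and the uniruledness of $\rr_8$. Up to the $\ZZ_2$-action swapping the two preimages of the node, a general point of $\Delta_0'$ is obtained by identifying two marked points of a smooth Prym curve $[C,\eta,p_1,p_2]$ of genus $7$, giving a birational identification $\Delta_0' \sim \cR_{7,2}/\ZZ_2$, where $\cR_{7,2}$ denotes the moduli space of $2$-pointed Prym curves of genus $7$.

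First, I lift the $\PP^7$-bundle structure from Theorem \ref{unir7} to $\cR_{7,2}$. Via the Nikulin surface correspondence, $\cR_{7,2}$ is birationally a $\PP^5$-bundle over the relative symmetric square $(\cS/\F_7^{\mathfrak{N}})^{(2)}$ of the universal Nikulin $K3$: the fiber above $[S,p_1+p_2]$ is the linear subsystem $|\mathfrak{c}_S - p_1 - p_2| \cong \PP^5$ of curves through both points. Since $15+5=20=\dim\Delta_0'$, these rational $\PP^5$'s cover $\Delta_0'$, exhibiting the boundary divisor as uniruled.

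Second, I promote uniruledness of $\Delta_0'$ to that of $\rr_8$ by producing rational curves in the interior of $\rr_8$. For each datum $(S,p_1,p_2)$ and each pencil $\PP^1 \subset \PP^5 = |\mathfrak{c}_S - p_1 - p_2|$, the induced $\PP^1$-family of nodal genus-$8$ Prym curves admits an algebraic one-parameter smoothing of the node, yielding an algebraic family of smooth Prym curves of genus $8$ parametrized by a rational surface (a compactification of the total space of the line bundle $T_{p_1}\otimes T_{p_2}$ over the pencil, whose restriction to $\PP^1$ is $\OO(-2)$). Varying the data over a $23$-dimensional parameter space yields a family of rational surfaces in $\rr_8$ whose images dominate $\rr_8$: the dimension count $23+2=25 \ge 21 = \dim\rr_8$ together with the local dominance near $\Delta_0'$ gives global dominance, and each rational surface supplies rational curves through its general points.

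The central technical point is realizing the smoothing algebraically across the pencil and verifying global dominance, despite the obstruction $N_{\Delta_0'/\rr_8}\big|_{\PP^5} \cong \OO(-2)$, which prevents a direct infinitesimal deformation of the $\PP^5$-fibers themselves out of $\Delta_0'$. The resolution is to couple the pencil with the smoothing parameter so as to produce a rational surface (rather than attempting to deform an entire $\PP^5$), and to invoke the versal deformation theory of nodal Prym curves for the required algebraicity.
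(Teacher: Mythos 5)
Your first step---building rational curves inside $\Delta_0'$ via pencils of genus $7$ curves on a Nikulin surface through two base points---is in the same spirit as what the paper does. But your second step is where the argument breaks, and the paper avoids it entirely.

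The paper does \emph{not} attempt to produce rational curves through a general point of $\rr_8$. Instead, it constructs a single sweeping curve $\Gamma \subset \Delta_0'$ (a Lefschetz pencil on a Nikulin surface with two prescribed base points, glued to a nodal family), computes the intersection numbers $\Gamma\cdot\lambda=8$, $\Gamma\cdot\delta_0'=42$, $\Gamma\cdot\delta_0''=0$, $\Gamma\cdot\delta_0^{\mathrm{ram}}=8$, and deduces $\Gamma\cdot K_{\rr_8}=-4<0$. Since $\Gamma$ sweeps $\Delta_0'$ and $\Gamma\cdot\delta_0'>0$, the curve $\Gamma$ intersects every pseudoeffective divisor non-negatively; hence $K_{\rr_8}$ is not pseudoeffective, and the cited criterion of Boucksom--Demailly--P\u{a}un--Peternell gives uniruledness. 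This is entirely an intersection-theoretic argument; the rational curve $\Gamma$ never leaves the boundary divisor.

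Your second step has a genuine gap. You assert that the nodal family over the pencil ``admits an algebraic one-parameter smoothing of the node'' fitting into a rational surface, and justify this by ``the versal deformation theory of nodal Prym curves''. Versal deformation theory produces only a formal (or analytic-local) smoothing; it does not hand you an \emph{algebraic} family over a complete rational surface, nor does it couple consistently across the pencil. You correctly identify that $\mathcal O(-2)$ is the degree of the relevant normal line bundle (this is precisely the paper's $E_x^2+E_y^2=-2$ computation), but then you treat this negativity as something that can be ``coupled with the smoothing parameter'' without exhibiting any geometric mechanism that actually realizes the smoothing. Concretely: after identifying $x\sim y$, the curves $\bar C_t$ no longer lie on the Nikulin surface $S$, and there is no surface in your setup on which $\bar C_t$ moves in a family that smooths the glued node. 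The paper's own sketch of an explicit uniruled parametrization (Theorem~\ref{r83}) has to work much harder, embedding a smooth genus $8$ Prym curve in a $16$-nodal canonical surface $Q_1\cap Q_2\cap\Sigma\subset\PP^5$ and moving it in a pencil there; nothing of that kind is present in your argument. Finally, even granting the rational surfaces, the inference from a dimension count ``$23+2\ge 21$'' plus ``local dominance near $\Delta_0'$'' to global dominance is not a proof: one must show the evaluation map is generically submersive, which is exactly the content you would be assuming rather than proving.

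The upshot: the paper's BDPP route sidesteps the smoothing problem entirely by staying inside the boundary and reading off non-pseudoeffectivity of $K_{\rr_8}$ from a sweeping-curve computation. Your route, if made rigorous, would prove more (an explicit parametrization), but as written it assumes exactly the hard geometric step.
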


Using the parametrization of $\cR_7$ via Nikulin surfaces, we construct a sweeping curve $\Gamma$ of the boundary divisor $\Delta_0^{'}$ of $\rr_8$ such that $\Gamma \cdot \delta_0^{'}>0$ and $\Gamma \cdot K_{\rr_8}<0$. This implies that the canonical class $K_{\rr_8}$ cannot be pseudoeffective, hence via \cite{BDPP}, the moduli space $\rr_8$ is uniruled. This way of showing uniruledness of a moduli space, though quite effective, does not lead to an \emph{explicit} uniruled parametrization of $\cR_8$. In Section 3, we sketch an alternative, more geometric way of showing that $\cR_8$ is uniruled, by embedding a general Prym-curve of genus $8$ in a certain canonical surface. A rational curve through a general point of $\rr_8$ is then induced by a pencil on this surface.

\section{Polarized Nikulin surfaces}

We briefly recall some basics on Nikulin surfaces, while referring to \cite{vGS}, \cite{GS} and \cite{Mo} for details. A \emph{symplectic involution} $\iota$ on a smooth $K3$ surface $Y$ has $8$ fixed points and we denote by $\bar{Y}:=Y/\langle \iota \rangle$ the quotient. The surface $\bar{Y}$ has $8$ nodes. Letting $\sigma:\widetilde{S}\rightarrow Y$ be the blow-up of the fixed points, the involution $\iota$ lifts to an involution $\tilde{\iota}:\widetilde{S}\rightarrow \widetilde{S}$ fixing the eight $(-1)$-curves $E_1, \ldots, E_8\subset \widetilde{S}$. Denoting by $f:\widetilde{S}\rightarrow S$ the quotient map by the involution $\widetilde{\iota}$, we obtain a smooth $K3$ surface $S$, together with a primitive embedding of the Nikulin lattice $\mathfrak{N}\cong E_8(-2)\hookrightarrow \mbox{Pic}(S)$, where $N_i=f(E_i)$ for $i=1, \ldots, 8$. In particular, the sum of rational curves $N:=N_1+\cdots+N_8$ is an even divisor on $S$, that is, there exists a class $e\in \mbox{Pic}(S)$ such that $e^{\otimes 2}=\OO_S(N_1+\cdots+N_8)$. The cover $f:\widetilde{S}\rightarrow S$ is branched precisely along the curves $N_1, \ldots, N_8$. The following diagram summarizes the notation introduced so far and will be used throughout the paper:
\begin{equation}\label{diagram}
\begin{CD}
{\widetilde{S}} @>{\sigma}>> {Y} \\
@V{f}VV @V{}VV \\
{S} @>{}>> {\bar{Y}} \\
\end{CD}
\end{equation}

Nikulin \cite{Ni} p.262  showed that the possible configurations of even sets of disjoint $(-2)$-curves on a $K3$ surface $S$ are only those consisting of either $8$ curves (in which case $S$ is a Nikulin surface as defined in this paper), or of $16$ curves, in which case $S$ is a Kummer surface. From this point of view, Nikulin surfaces appear naturally as the \emph{Prym analogues} of $K3$ surfaces.

\begin{definition} A \emph{polarized Nikulin surface} of genus $g$ consists of a smooth $K3$ surface and a primitive embedding $j$ of the lattice $\Lambda_g=\mathbb Z\cdot \mathfrak{c}\oplus \mathfrak N\hookrightarrow \mbox{Pic}(S)$, such that $\mathfrak{c}^2=2g-2$ and the class $j(\mathfrak{c})$ is nef.
\end{definition}

Polarized Nikulin surfaces of genus $g$ form an irreducible $11$-dimensional moduli space $\mathcal{F}_g^{\mathfrak{N}}$, see for instance \cite{Do1}. Structure theorems for $\F_g^{\mathfrak{N}}$ for genus $g\leq 6$ have been established in \cite{FV}. For instance the following result is proven in \emph{loc.cit.} for Nikulin surfaces of genus $g=6$. Let $V=\mathbb C^5$ and fix a smooth quadric $Q\subset \PP(V)$. Then one has a birational isomorphism, which, in particular, shows that $\F_6^{\mathfrak{N}}$ is unirational:
$$\F_6^{\mathfrak{N}}\stackrel{\cong}\dashrightarrow G\Bigl(7, \bigwedge^2 V \Bigr)^{\mathrm{ss}}\dblq \mathrm{Aut}(Q).$$

On the other hand, fundamental facts about $\F_g^{\mathfrak{N}}$ are still not known. For instance, it is not clear whether $\mathcal{F}_g^{\mathfrak{N}}$ is a variety of general type for large $g$. Nikulin surfaces have been recently used decisively in \cite{FK} to prove the Prym-Green Conjecture on syzygies of general Prym-canonical curves of even genus.

\vskip 3pt

For a polarized Nikulin surface $(S, j)$ of genus $g$ as above, we set $C:=j(\mathfrak{c})$ and then $H\equiv C-e\in \mbox{Pic}(S)$. It is shown in \cite{GS}, that for any Nikulin surface $S$ having minimal Picard lattice $\mbox{Pic}(S)=\Lambda_g$, the linear system $\OO_S(H)$ is very ample for $g\geq 6$. We compute that $H^2=2g-6$ and denote by $\phi_H:S\rightarrow \PP^{g-2}$ the corresponding embedding. Since $N_i\cdot H=1$ for $i=1, \ldots,8$, it follows that the images $\phi_H(N_i)\subset \PP^{g-2}$ are lines. The existence of two closely linked distinguished polarizations $\OO_S(C)$ and $\OO_S(H)$ of genus $g$ and $g-2$ respectively on any Nikulin surface is one of the main sources for the rich geometry of the moduli space $\F_g^{\mathfrak{N}}$ for $g\leq 6$, see \cite{FV} and \cite{vGS}.

\vskip 4pt

Suppose that $\bigl[S, j:\Lambda_7\hookrightarrow \mbox{Pic}(S)\bigr]$ is a polarized Nikulin surface of genus $7$. In this case $$\phi_H:S\hookrightarrow \PP^5$$ is a surface of degree $8$ which is a complete intersection of three quadrics. For each smooth curve $C\in |\OO_S(j(\mathfrak{c}))|$, we have that $[C, \eta:=e_C]\in \cR_7$. Since $\OO_C(1)=K_C\otimes \eta$, it follows that the restriction $\phi_{H|C}:C\hookrightarrow \PP^5$ is a Prym-canonically embedded curve of genus $7$. This assignment gives rise to the map $\chi_7:\mathcal{P}_7^{\mathfrak{N}}\rightarrow \cR_7$.

\vskip 3pt

Conversely, to a general Prym curve $[C, \eta]\in \cR_7$ we associate a unique Nikulin surface of genus $7$ as follows. We consider the Prym-canonical embedding $\phi_{K_C\otimes \eta}:C \hookrightarrow \PP^5$ and observe that $S:=\mbox{bs}(|\I_{C/\PP^5}(2)|)$ is a complete intersection of three quadrics, that is, if smooth, a $K3$ surfaces of degree $8$. In fact, $S$ is smooth for a general choice of $[C, \eta]\in \cR_7$, see \cite{FV} Proposition 2.3. We then set $N\equiv 2(C-H)\in \mbox{Pic}(S)$ and note that $N^2=-16$ and $N\cdot H=8$. Using the cohomology exact sequence
$$0\longrightarrow H^0(S, \OO_S(N-C))\longrightarrow H^0(S, \OO_S(N)) \longrightarrow H^0(C, \OO_C(N)) \longrightarrow 0,$$
since $\OO_C(N)$ is trivial, we conclude that the divisor $N$ is effective on $S$. It is shown in \emph{loc.cit.} that for a general $[C, \eta]\in \cR_7$, we have a splitting
$N=N_1+\cdots+N_8$ into a sum of $8$ disjoint lines with $C\cdot N_i=0$ for $i=1, \ldots, 8$. This turns $S$ into a Nikulin surface and explains the birational isomorphisms
$$\chi_7^{-1}:\mathcal{\P}_7^{\mathfrak N}\stackrel{\cong}\dashrightarrow \cR_7$$
referred to in the Introduction.

\vskip 3pt

Suppose now that $\bigl[S, \OO_S(C), N_8\bigr]\in \widehat{\F}_7^{\mathfrak{N}}$, that is, we single out a $(-2)$-curve in the Nikulin lattice. Writing $e^{\otimes 2}=\OO_C(N_1+\cdots+N_8)$, the choice of $N_8$ also determines the sum of the seven remaining lines $N_1+\cdots+N_7$, where $H\cdot N_i=1$, for $i=1, \ldots, 8$.  We compute
$$(C-N_1-\cdots-N_7)^2=-2 \ \  \mbox{ and } \ \ (C-N_1-\cdots-N_7)\cdot H=5,$$ in particular, there exists an effective divisor $R$ on $S$, with $R\equiv C-N_1-\cdots-N_7$. Note also that $R\cdot N_i=2$, for $i=1, \ldots, 7$, that is, $R\subset \PP^5$ comes endowed with seven bisecant lines.

\begin{proposition}\label{vanish} For a decorated Nikulin surface $\bigl[S, \OO_S(C), N_8\bigr]\in \widehat{\F}_7^{\mathfrak{N}}$ satisfying $\mathrm{Pic}(S)=\Lambda_7$, we have that
$H^1(S, \OO_S(C-N_1-\cdots-N_7))=0$. In particular, $$R\in |\OO_S(C-N_1-\cdots-N_7)|$$ is a smooth rational quintic curve on $S$.
\end{proposition}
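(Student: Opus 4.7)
The plan is to reduce the vanishing in two stages -- first via Riemann--Roch and a restriction sequence along $N_8$ to the single claim that $H-e$ is not effective, and then to verify this by a combinatorial analysis of effective decompositions in $\Lambda_7$. The intersection form on $\Lambda_7$ satisfies $N_i\cdot N_j=-2\delta_{ij}$, $e\cdot N_i=-1$, $e^2=-4$, $C^2=12$ and $C\cdot N_i=C\cdot e=0$, from which one reads $R^2=-2$, $R\cdot H=5$ and $R\cdot N_8=0$. Riemann--Roch on the $K3$ surface $S$ gives $\chi(\OO_S(R))=1$, and since $H$ is ample when $\mathrm{Pic}(S)=\Lambda_7$ (by \cite{GS}), the class $-R$ has negative $H$-degree and $h^2(\OO_S(R))=h^0(\OO_S(-R))=0$. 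Thus the proposition reduces to $h^1(\OO_S(R))=0$, which then also yields $h^0(\OO_S(R))=1$.

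Exploiting $R\cdot N_8=0$, the restriction sequence
$$0\longrightarrow \OO_S(R-N_8)\longrightarrow \OO_S(R)\longrightarrow \OO_{\PP^1}\longrightarrow 0$$
combined with $H^1(\OO_{\PP^1})=0$ shows $h^1(\OO_S(R))\leq h^1(\OO_S(R-N_8))$. A direct computation in $\Lambda_7$ yields $R-N_8=C-2e=H-e$, with $(H-e)^2=-4$ and $(H-e)\cdot H=4$, so $\chi(\OO_S(H-e))=0$ and $h^2(\OO_S(H-e))=h^0(\OO_S(e-H))=0$ (again by ampleness of $H$). Hence the entire problem reduces to showing that $H-e$ is not effective on $S$.

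I expect this non-effectivity to be the main obstacle, and I would settle it by enumerating possible decompositions under the hypothesis $\mathrm{Pic}(S)=\Lambda_7$. An irreducible effective representative of $H-e$ would have $p_a=1+(H-e)^2/2=-1<0$, which is impossible, so $H-e=\sum m_iD_i$ with each $D_i$ irreducible, $m_i\geq 1$, and $\sum m_i(H\cdot D_i)=4$. Since $|H|$ embeds $S\hookrightarrow \PP^5$, a short search in $\Lambda_7$ shows that the only $(-2)$-classes of $H$-degree $1$ are $N_1,\ldots,N_8$ and that \emph{no} $(-2)$-classes of $H$-degree $2$ or $3$ exist; in particular $S$ carries no smooth conics and no smooth rational cubics. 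If every $D_i$ is a line, then $(H-e)^2=-2\sum m_i^2=-4$ and $\sum m_i=4$ force $\sum m_i^2=2$, contradicting the integer inequality $\sum m_i^2\geq \sum m_i$. If some $D_i$ is an irreducible cubic, then necessarily $H-e=D_3+N_j$ and one computes $D_3^2=-10$, violating $p_a(D_3)\geq 0$; an irreducible quartic is excluded by the same $p_a=-1$ computation. Hence $H-e$ is not effective, delivering $h^1(\OO_S(R))=0$ and $h^0(\OO_S(R))=1$.

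To conclude that the unique $R\in |\OO_S(C-N_1-\cdots-N_7)|$ is a smooth rational quintic it suffices to prove $R$ irreducible: then $p_a(R)=0$ gives $R\cong\PP^1$ and $R\cdot H=5$ identifies it as a quintic in $\PP^5$. The irreducibility argument is a second pass of the same machinery. A nontrivial splitting $R=D_1+D_2$ into effective divisors must, after swapping, satisfy $H\cdot D_1\in\{1,2\}$, and the absence of conics forces $D_1=N_j$. For $j=8$ the complement $R-N_8=H-e$ would be effective, contradicting the previous step. For $j\leq 7$ one has $(R-N_j)^2=-8$ and $(R-N_j)\cdot H=4$, and the only decomposition of $R-N_j$ into effective components compatible with these numerics is as four distinct lines $N_{k_1},\ldots,N_{k_4}$, which would write $R$ itself as a sum of five of the $N_i$'s. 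This is incompatible with $R=C-N_1-\cdots-N_7$: its $C$-coefficient in the basis $\{C,N_1,\ldots,N_7,e\}$ of $\Lambda_7$ equals $1$, whereas every non-negative combination of $N_1,\ldots,N_8$ lies in the Nikulin sublattice $\mathfrak N$ and has $C$-coefficient $0$.
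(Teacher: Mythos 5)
Your proof is correct, but it follows a genuinely different route from the paper's. The paper proves irreducibility of $R$ in a single reductio ad absurdum: if $R$ is reducible, there is an irreducible $(-2)$-curve $Y$ with $Y\cdot R<0$ and $R-Y$ effective, and expanding $Y$ in the basis of $\Lambda_7$ forces $Y\equiv R$, a contradiction. Irreducibility then automatically gives $h^0(\OO_S(R))=1$ (since $R^2<0$), hence $h^1=0$, and $p_a(R)=0$. You instead prove $h^1(\OO_S(R))=0$ first, by a two-step reduction — Riemann--Roch and ampleness of $H$ reduce the statement to $h^1(\OO_S(R))=h^0(\OO_S(R))-1$, and restriction along $N_8$ (using $R\cdot N_8=0$) further reduces it to the non-effectivity of $H-e=R-N_8$, which you settle by a degree-by-degree decomposition argument — and only then establish irreducibility of $R$ by a separate, similar decomposition analysis. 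Both arguments rest on the same lattice arithmetic in $\Lambda_7$, but the paper's is more compact and unified (one contradiction kills both claims), whereas yours is more granular and isolates the non-effectivity of $H-e$ as an intermediate lemma in its own right. The case analyses you invoke (``a short search in $\Lambda_7$'' for the absence of $(-2)$-classes of $H$-degree $2$, the enumeration of decompositions of $H-e$ and of $R-N_j$) are correct as stated but are asserted rather than carried out; to be fully rigorous they should be spelled out, though each is a routine diophantine check against $\sum b_i^2=6a^2+1$ and the appropriate linear constraint on $12a+\sum b_i$.
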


\begin{proof}
Assume by contradiction that the curve $R\subset S$ is reducible. In that case, there exists a smooth irreducible $(-2)$-curve $Y\subset S$, such that $Y\cdot R<0$ and
$H^0(S, \OO_S(R-Y))\neq 0$. Assuming $\mbox{Pic}(S)$ is generated by $C$, $N_1, \ldots, N_8$ and the class $e=(N_1+\cdots+N_8)/2$, there exist integers $a, b, c_1, \ldots, c_8\in \mathbb Z$, such that
$$Y\equiv a\cdot C+\Bigl(c_1+\frac{b}{2}\Bigr)\cdot N_1+\cdots+\Bigl(c_8+\frac{b}{2}\Bigr)\cdot N_8.$$
Setting $b_i:=c_i+\frac{b}{2}$, the numerical hypotheses on $Y$ can be rewritten in the following form:
\begin{equation}\label{ineq}
b_1^2+\cdots+b_8^2=6a^2+1  \ \mbox{ and } 6a+b_1+\cdots+b_8\leq -1.
\end{equation}
Since $Y$ is effective, we find that $a\geq 0$ (use that $C\subset S$ is nef). Applying the same considerations to the effective divisor $R-Y$,
we obtain that $a\in \{0,1\}$.

\vskip 3pt

If $a=0$, then $Y\equiv b_1 N_1+\cdots+b_8 N_8\geq 0$, hence $b_i\geq 0$ for $i=1, \ldots, 8$, which contradicts the inequality
$b_1+\cdots+b_8\leq -1$, so this case does not appear.

\vskip 2pt

If $a=1$, then $R-Y\equiv -(1+b_1) N_1-\cdots -(1+b_7) N_7-b_8 N_8\geq 0$, therefore
$b_8\leq 0$ and $b_i\leq -1$ for $i=1, \ldots,7$. From (\ref{ineq}), we obtain that $b_8=0$ and $b_1=\cdots=b_7=-1$. Thus $Y\equiv R$, which is a contradiction, for $Y$ was assumed to be a proper irreducible component of $R$.
\end{proof}

Retaining the notation above, we obtain a  map $\psi:\widehat{\F}_7^{\mathfrak{N}}\dashrightarrow \mathfrak{Rat}_7$,  defined by
$$\psi\Bigl([S, \OO_S(C), N_8]\Bigr):=[R,  \ N_1\cdot R+\cdots+N_7\cdot R],$$
where the cycle $N_i\cdot R\in \mbox{Sym}^2(R)$ is regarded as an effective divisor of degree $2$ on $R$.  The map $\psi$ is regular over the dense open subset of $\widehat{\F}_7^{\mathfrak{N}}$ consisting of Nikulin surfaces having the minimal Picard lattice $\Lambda_7$. We are going to show that $\psi$ is a birational isomorphism by explicitly constructing its inverse. This will be the map $\varphi$ described in the Introduction in Theorem \ref{m14}.

\vskip 5pt

We fix a smooth rational quintic curve $R\subset \PP^5$ and recall the canonical identification
\begin{equation}\label{ident}
\bigl|\I_{R/\PP^5}(2)\bigr|=\bigl|\OO_{\mathrm{Sym}^2(R)}(3)\bigr|
\end{equation}
between the linear system of quadrics containing $R\subset \PP^5$ and that of plane cubics.
Here we use the isomorphism
$\mbox{Sym}^2(R)\stackrel{\cong}\longrightarrow \PP^2,$  under which to a quadric $Q\in H^0(\PP^5, \I_{R/\PP^5}(2))$ one assigns the symmetric correspondence
$$\Sigma_Q:=\{x+y\in \mbox{Sym}^2(R): \langle x,y\rangle \subset Q\},$$
 which is a cubic curve in $\mbox{Sym}^2(R)$.

Let $N_1, \ldots, N_7$ be general bisecant lines to $R$ and consider the nodal curve of genus $7$
$$C:=R\cup N_1\cup \ldots \cup N_7\subset \PP^5.$$

\begin{proposition}
For a general choice of the bisecants $N_1, \ldots, N_7$ of the curve $R\subset \PP^5$, the base locus
$$S:=\mathrm{Bs}\ \bigl|\I_{C/\PP^5}(2)\bigr|$$
is a smooth $K3$ surface of degree $8$.
\end{proposition}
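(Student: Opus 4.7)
The approach has three components: verify that $|\I_{C/\PP^5}(2)|$ has the right dimension, that its base locus has the expected dimension $2$, and that the base locus is smooth for generic choice.

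To compute the dimension of the net, I would use the identification (\ref{ident}) to rewrite quadrics through $R$ as plane cubics in $\mathrm{Sym}^2(R)\cong \PP^2$. The condition that $Q$ contain the bisecant $N_i=\langle x_i,y_i\rangle$ translates into the single linear condition $x_i+y_i\in \Sigma_Q$. For seven general points of $\PP^2$, the linear system of plane cubics through them has projective dimension two, hence $h^0(\I_{C/\PP^5}(2))=3$ and $|\I_{C/\PP^5}(2)|$ is a net.

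Picking a basis $Q_0,Q_1,Q_2$, the zero locus $S=V(Q_0,Q_1,Q_2)$ satisfies $\dim S\geq 2$. If the $Q_i$ form a regular sequence, then $S$ is a complete intersection of the expected dimension with $\deg S=8$ and, by adjunction, $\omega_S\cong\OO_S$; hence $S$ is a $K3$ surface as soon as it is smooth. To establish both the dimension and the smoothness simultaneously for generic configurations, I plan to argue by semicontinuity, since both conditions are open on the parameter space of septuples $(R,N_1,\ldots,N_7)$ (which has dimension $11$ modulo $PGL(6)$, as seven bisecants correspond to seven points in $\mathrm{Sym}^2(R)\cong \PP^2$ modulo $PGL(2)\cong \mathrm{Aut}(R)$).

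The task thus reduces to producing a single example. A natural source is any smooth Nikulin surface $S_0$ of genus $7$ with $\mathrm{Pic}(S_0)=\Lambda_7$, whose existence is guaranteed by the lattice-theoretic results recalled in \cite{vGS}, \cite{Do1}: embed $S_0$ in $\PP^5$ by the polarization $H$ to obtain a smooth complete intersection of three quadrics, single out one line $N_8$, and apply Proposition \ref{vanish} to produce the smooth rational quintic $R\in |C-N_1-\cdots-N_7|$ meeting each of the remaining $N_i$ in two points. The main obstacle is then checking that the net of quadrics through $R\cup N_1\cup \cdots \cup N_7$ built from this example coincides with $|\I_{C/\PP^5}(2)|$, i.e.\ cuts out precisely $S_0$ rather than some larger subscheme; but this follows from the dimension count (both nets are two-dimensional) together with the evident containment $S_0\subset V(Q_0,Q_1,Q_2)$, which forces equality of the base loci. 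Openness of smoothness then transfers this single smooth example to the generic point of the parameter space, completing the proof.
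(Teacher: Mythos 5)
Your proposal follows essentially the same route as the paper's proof: the identification $\bigl|\I_{R/\PP^5}(2)\bigr|=\bigl|\OO_{\mathrm{Sym}^2(R)}(3)\bigr|$ to compute that $|\I_{C/\PP^5}(2)|$ is a net for generic bisecants, then an invocation of Proposition \ref{vanish} to realize a Nikulin surface $S_0$ with $\mathrm{Pic}(S_0)=\Lambda_7$ as a smooth instance of the base locus, and finally transfer to the generic point by openness/irreducibility of $\mathfrak{Rat}_7$. The extra steps you spell out (adjunction giving $\omega_S\cong\OO_S$, the degree count, the $11$-dimensional parameter count) are implicit in the paper but do not change the structure of the argument.
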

\begin{proof}
The bisecant line $N_i$ is determined by the degree $2$ divisor $N_i\cdot R\in \mbox{Sym}^2(R)$. Under the identification (\ref{ident}), the quadrics containing the line $N_i$ are identified with the cubics in $\bigl|\OO_{\mathrm{Sym}^2(R)}(3)\bigr|$ that pass through the point $N_i\cdot R$. It follows that the linear system  $\bigl |\I_{C/\PP^5}(2)\bigr|$ corresponds to the linear system of cubics in $\mbox{Sym}^2(R)$ passing through $7$ general points. Since the secants $N_i$ (and hence the points $N_i\cdot R\in \mbox{Sym}^2(R)$) have been chosen to be general, we obtain that $\mbox{dim } |\I_{C/\PP^5}(2)|=2$.

We have proved in Proposition \ref{vanish} that for a general Nikulin surface $[S, \OO_S(C)]\in \mathcal{F}_7^{\mathfrak{N}}$ we have
$$H^1(S, \OO_S(C-N_1-\cdots-\hat{N}_i-\cdots-N_8))=0,$$ and the corresponding curves $R_i\in \bigl|\OO_S(C-N_1-\cdots-\hat{N}_i-\cdots-N_8)\bigr|$ are smooth rational quintics for $i=1, \ldots, 8$. In particular, the morphism $\psi:\widehat{\F}_7^{\mathfrak{N}}\dashrightarrow \mathfrak{Rat}_7$ is defined on all components of $\widehat{\F}_7^{\mathfrak{N}}$ and the image of each component is an element of $\mathfrak{Rat}_7$ (a priori, one does not know that $\widehat{\F}_7^{\mathfrak{N}}$ is irreducible, this will follow from our proof).  For such a point in $\mbox{Im}(\psi)$, it follows that the base locus $\mbox{bs } \bigl|\I_{C/\PP^5}(2)\bigr|$ is a smooth surface, in fact a general Nikulin surface of genus $7$. Hence $[S,\OO_S(C), N_i]\in \mbox{Im}(\varphi)$ for $i=1, \ldots, 8$. Since $\mathfrak{Rat}_7$ is an irreducible variety, the conclusion follows.
\end{proof}

\noindent \emph{Proof of Theorem \ref{m14}}. As explained in the Introduction, the map $\varphi:\mathfrak{Rat}_7\dashrightarrow \widehat{\F}_7^{\mathfrak{N}}$ is well-defined and clearly the inverse of $\psi$. In particular, it follows that $\widehat{\F}_7^{\mathfrak{N}}$ is also irreducible (and in fact unirational).
\hfill $\Box$

\section{Configuration spaces of points in the plane}

Throughout this section we use the identification $\mbox{Sym}^2(\PP^1)\cong \PP^2$ induced by the map $\rho:\PP^1\times \PP^1\rightarrow \PP^2$ obtained by taking the projection of the Segre embedding of $\PP^1\times \PP^1$ to the space of symmetric tensors, that is, $\rho\bigl([a_0,a_1],[b_0,b_1]\bigr)=[a_0 b_0, a_1b_1, a_0b_1+a_1b_0]$. We identify the diagonal $\Delta\subset \PP^1\times \PP^1$ with its image $\rho(\Delta)$ in $\PP^2$. We view $PGL(2)$ as the subgroup of automorphisms of $\PP^2$ that preserve the conic $\Delta$. Furthermore, the choice of $\Delta$ induces a canonical identification
$$PGL(3)/PGL(2)=|\OO_{\PP^2}(2)|=\PP^5.$$

For $g\geq 5$, we consider the projection
$$\beta:\mathfrak{Rat}_g:=\mbox{Hilb}^g(\PP^2)\dblq SL(2)\rightarrow \mbox{Hilb}^g(\PP^2)\dblq SL(3)=:U_g^2.$$

\begin{definition}
If $X$ is a del Pezzo surface of degree $2$, a \emph{contraction} of $X$ is the blow-up $f:X\rightarrow \PP^2$ of $7$ points in general position in $\PP^2$.
\end{definition}

Specifying a pair $(X,f)$ as above, amounts to giving a \emph{plane model} of the del Pezzo surface, that is, a pair $(X,L)$, where $X$ is a del Pezzo surface with $K_X^2=2$ and $L\in \mbox{Pic}(S)$ is such that $L^2=1$ and $K_X\cdot L=-2$. Therefore $U_7^2$ is the GIT moduli space of pairs $(X,f)$ (or equivalently of pairs $(X,L)$) as above.

\begin{proposition}\label{descent}
The morphism $\beta: \mathrm{Hilb}^g(\PP^2)\dblq SL(2)\rightarrow U_g^2$ is a locally trivial $\PP^5$-fibration.
\end{proposition}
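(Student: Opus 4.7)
The plan is to realize $\beta$ as the natural $\PP^5$-bundle associated to the inclusion $PGL(2)\subset PGL(3)$, which identifies $PGL(2)$ with the stabilizer of $\Delta\in |\OO_{\PP^2}(2)|$. Since the actions of $SL(2)$ and $SL(3)$ on $\mathrm{Hilb}^g(\PP^2)$ factor through $PGL$, a point of $\mathfrak{Rat}_g=\mathrm{Hilb}^g(\PP^2)\dblq SL(2)$ may equivalently be described as a $PGL(3)$-orbit of pairs $(\Sigma,Q)$, where $\Sigma\in \mathrm{Hilb}^g(\PP^2)$ is a configuration and $Q$ is a smooth conic obtained as the image of $\Delta$ under the $PGL(3)$-element normalizing $\Sigma$. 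The morphism $\beta$ is then the forgetful map $(\Sigma,Q)\mapsto\Sigma$. For $g\ge 5$, a general $\Sigma$ has trivial $PGL(3)$-stabilizer, so both GIT quotients are geometric on a dense open, and the set-theoretic fiber of $\beta$ over $[\Sigma]\in U_g^2$ is the $\PP^5$ of conics $|\OO_{\PP^2}(2)|$.

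To upgrade this set-theoretic description to Zariski-local triviality, the key point is that the homogeneous-space projection $PGL(3)\to PGL(3)/PGL(2)=\PP^5$ admits an algebraic Zariski-local section $\sigma:V\to PGL(3)$. I would make this explicit by linear algebra: representing a conic $Q$ by its Gram matrix $M_Q$, Gram-Schmidt orthogonalization of the standard basis with respect to $M_Q$ (valid on the dense open $V\subset\PP^5$ where the successive leading minors do not vanish) yields an algebraic matrix-valued function $\sigma(Q)\in PGL(3)$ satisfying $\sigma(Q)\cdot\Delta=Q$. Combining $\sigma$ with a Zariski-local section $\tau:U\to \mathrm{Hilb}^g(\PP^2)$ of the geometric quotient $\mathrm{Hilb}^g(\PP^2)\dashrightarrow U_g^2$ on an open $U\subset U_g^2$ of orbits with trivial stabilizer, one obtains the local trivialization
\[
U\times V\;\longrightarrow\;\beta^{-1}(U),\qquad ([\Sigma],Q)\;\longmapsto\;\bigl[\sigma(Q)\cdot \tau([\Sigma])\bigr]_{PGL(2)}.
\]

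The main obstacle is precisely the construction of the section $\sigma$: principal $PGL$-bundles are not Zariski-locally trivial in general (there is a Brauer-class obstruction for $PGL(n)$), so one cannot appeal to formal nonsense and must exhibit the explicit Gram-Schmidt step above. Once $\sigma$ is in hand, compatibility with the residual $PGL(2)$-action and verification that the assembled map is an isomorphism of schemes onto $\beta^{-1}(U)$ are routine, and the existence of the auxiliary section $\tau$ follows from standard GIT arguments combined with the rationality of $\mathrm{Hilb}^g(\PP^2)$ and the freeness of the $PGL(3)$-action on its generic orbit.
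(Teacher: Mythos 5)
Your strategy---produce Zariski-local sections $\sigma$ of $PGL(3)\to PGL(3)/PGL(2)=\PP^5$ and $\tau$ of $\mathrm{Hilb}^g(\PP^2)\to U_g^2$ and assemble a local trivialization from them---is different from the paper's, and it runs into a genuine obstruction. Gram--Schmidt with respect to $M_Q$ produces, algebraically in $Q$, a matrix $P$ with $P^T M_Q P$ \emph{diagonal}, but the diagonal entries (ratios of leading principal minors) depend on $Q$; to pass from an arbitrary diagonal form to a fixed one such as $M_\Delta$, even only up to scalar, one must rescale the basis by $\sqrt{d_i/(\lambda c_i)}$, and these square roots are not rational functions of $M_Q$ on any Zariski open. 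This is not a defect of the particular recipe: \emph{no} algebraic $\sigma$ exists even locally. The associated $\PP^1$-bundle of the $PGL(2)$-torsor $PGL(3)\to\PP^5$ is exactly the universal conic $\cC=\{(Q,p):p\in Q\}\subset \PP^5\times\PP^2$, and $\cC$ has no rational section over $\PP^5$: by the Lefschetz theorem $\mathrm{Pic}(\cC)\cong\ZZ\langle H\rangle\oplus\ZZ\langle h\rangle$ with $H=\OO_{\PP^5}(1)|_{\cC}$ and $h=\OO_{\PP^2}(1)|_{\cC}$, and any $aH+bh$ has even degree $2b$ on a conic fibre, so there is no relative $\OO(1)$; equivalently the associated Brauer class in $\mathrm{Br}\bigl(\CC(\PP^5)\bigr)[2]$ is non-trivial. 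Hence $PGL(3)\to\PP^5$ is not Zariski-locally trivial as a $PGL(2)$-bundle, and the key step of your argument fails. You flagged the Brauer obstruction only to dismiss it via Gram--Schmidt; in fact it is fatal here. (The auxiliary section $\tau$ is likewise a section of a $PGL(3)$-torsor over the quasi-projective $U_g^2$, and the ``rationality of the total space plus generic freeness'' you invoke does not by itself kill the relevant Brauer class, so that step is also unjustified as written.)

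The paper avoids all of this by never trying to split either torsor. Instead it exhibits $\mathfrak{Rat}_g\to U_g^2$ directly as the projectivization of a vector bundle: using the self-duality of $\PP^2$ with respect to $\Delta$ to reinterpret a length-$g$ subscheme as a configuration of $g$ lines, it builds the rank-$6$ bundle $\E$ on $\mathrm{Hilb}^g(\PP^2)$ with fibre $H^0\bigl(\OO_{\ell_1+\cdots+\ell_g}(2)\bigr)$, descends it (projectively) to $U_g^2$, and identifies $\PP(E)$ with $\mathrm{Hilb}^g(\PP^2)\dblq SL(2)$ via $(\ell_1+\cdots+\ell_g,Q)\bmod SL(3)\mapsto \sigma(\ell_1)+\cdots+\sigma(\ell_g)\bmod SL(2)$ for any $\sigma$ with $\sigma(Q)=\Delta$---a well-defined map precisely because one quotients by $SL(2)$ on the target, so no algebraic choice of $\sigma$ is required. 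Since projectivizations of vector bundles are Zariski-locally trivial, the conclusion follows. If you want to rescue your line of attack, you would have to replace the sections $\sigma,\tau$ by the paper's intrinsic projective-bundle construction; there is no elementary linear-algebra shortcut.
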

\begin{proof}
Having fixed the conic $\Delta\subset \PP^2$, we have an identification $\PP^2\cong  \mbox{Sym}^2(\Delta)\cong (\PP^2)^{\vee}$, that is, we view points in $\mbox{Sym}^2(\Delta)$ as lines in $\PP^2$. A general point $D\in \mbox{Hilb}^g(\PP^2)$ corresponds to a union $D=\ell_1+\cdots + \ell_g$ of $g$ lines in $\PP^2$, such that $\mbox{Aut}(\{\ell_1, \ldots, \ell_g\})=1$.
We consider the rank $6$ vector bundle $\E$ over $\mbox{Hilb}^g(\PP^2)$ with fibre
$$\E(\ell_1+\cdots+\ell_g):=H^0\bigl(\OO_{\ell_1+\cdots+\ell_g}(2)\bigr).$$
Clearly $\E$ descends to a vector bundle $E$ over the quotient $U^2_g$. We then observe that one has a canonical identification $\PP(E)\cong \mbox{Hilb}^g(\PP^2)\dblq SL(2)$, or more geometrically, $\ra_g$ is the moduli space of pairs consisting of an unordered configuration of $g$ lines and a conic in $\PP^2$. The birational isomorphism
$\PP(E)\rightarrow \mbox{Hilb}^g(\PP^2)\dblq SL(2) $ is given by the assignment
$$\Bigl(\ell_1+\cdots+\ell_g, Q\Bigr) \mbox{ mod } SL(3)\mapsto \sigma(\ell_1)+\cdots+\sigma(\ell_g) \mbox{ mod } SL(2),$$
where $\sigma \in SL(3)$ is an automorphism such that $\sigma(Q)=\Delta$.
\end{proof}

\vskip 3pt

\noindent \emph{Proof of Theorem \ref{rat8}}. We have established that the moduli space $\widehat{\F}_7^{\mathfrak{N}}$ is birationally isomorphic to the projectivization of a $\PP^5$-bundle
over $U_7^2$. Since $U_7^2$ is rational, cf. \cite{Bo} Theorem 2.2.4.2, we conclude.
\hfill $\Box$

\begin{remark} In view of Theorem \ref{rat8}, it is natural to ask whether there exists a rational \emph{modular} degree $8$ cover $\widehat{\cR}_7\rightarrow \cR_7$ which is a locally trivial $\PP^7$-bundle over the rational variety $\widehat{\F}_7^{\mathfrak{N}}$, such that the following diagram is commutative:
$$
     \xymatrix{
         \widehat{\cR}_{7} \ar[r]^{?} \ar[d]_{8:1} & \widehat{\F}_{7}^{\mathfrak{N}} \ar[d]_{8:1} \ar[r]^{\cong} & \mathfrak{Rat}_7   \\
          \cR_7   \ar[r]^{\PP^7}     & \F_7^{\mathfrak{N}} \\}
$$
One candidate for the cover $\widehat{\cR}_7$ is the universal singular locus of the Prym-theta divisor,
$$\widehat{\cR}_7:=\Bigl\{[C, \eta, L]\in \cR_7: [C, \eta]\in \cR_7 \mbox{ and } L\in \mathrm{Sing}(\Xi)/\pm\Bigr\},$$
where $\mbox{Sing}(\Xi)=\{L\in \mbox{Pic}^{2g-2}(\widetilde{C}):\mathrm{Nm}_f(L)=K_C, h^0(C,L)\geq 4, h^0(C,L)\equiv 0 \mbox{ mod } 2\}.$
It is shown in \cite{De} that for a general point $[C, \eta]\in \cR_7$, the locus $\mbox{Sing}(\Xi)$ is reduced and consists of $16$ points, so indeed
$\mbox{deg}(\widehat{\cR}_7/\cR_7)=8$. So far we have been unable to construct the required map $\widehat{\cR}_7\rightarrow \widehat{\F}_7^{\mathfrak{N}}$
and we leave this as an open question.
\end{remark}

\section{The uniruledness of $\rr_8$}

We now explain how our structure results on $\F_7^{\mathfrak{N}}$ and $\cR_7$ lead to an easy proof of the uniruledness of $\rr_8$. We begin by reviewing  a few facts about the compactification $\rr_g$ of $\cR_g$ by means of stable Prym curves, see \cite{FL} for details.  The geometric points of the coarse moduli space $\rr_g$ are triples $(X, \eta, \beta)$, where $X$ is a quasi-stable curve of genus $g$, $\eta\in \mbox{Pic}(X)$ is a line bundle of total degree is $0$ such that $\eta_{E}=\OO_E(1)$ for each smooth rational component $E\subset X$ with $|E\cap \overline{X-E}|=2$  (such a component is said to be \emph{exceptional}), and $\beta:\eta^{\otimes 2}\rightarrow \OO_X$ is a sheaf homomorphism whose restriction to any non-exceptional component is an isomorphism. If $\pi:\rr_g\rightarrow \mm_g$ is the map dropping the Prym structure, one has the formula \cite{FL}
\begin{equation}\label{pullbackrg}
\pi^*(\delta_0)=\delta_0^{'}+\delta_0^{''}+2\delta_{0}^{\mathrm{ram}}\in CH^1(\rr_g),
\end{equation}
where $\delta_0^{'}:=[\Delta_0^{'}], \, \delta_0^{''}:=[\Delta_0^{''}]$, and $\delta_0^{\mathrm{ram}}:=[\Delta_0^{\mathrm{ram}}]$ are irreducible boundary divisor classes on $\rr_g$, which we describe by specifying their respective general points.

\vskip 3pt

We choose a general point $[C_{xy}]\in \Delta_0\subset \mm_g$ corresponding to a smooth $2$-pointed curve $(C, x, y)$ of genus $g-1$ and consider the normalization map $\nu:C\rightarrow C_{xy}$, where $\nu(x)=\nu(y)$. A general point of $\Delta_0^{'}$ (respectively of $\Delta_0^{''}$) corresponds to a pair $[C_{xy}, \eta]$, where $\eta\in \mbox{Pic}^0(C_{xy})[2]$ and $\nu^*(\eta)\in \mbox{Pic}^0(C)$ is non-trivial
(respectively, $\nu^*(\eta)=\OO_C$). A general point of $\Delta_{0}^{\mathrm{ram}}$ is a Prym curve of the form $(X, \eta)$, where $X:=C\cup_{\{x, y\}} \PP^1$ is a quasi-stable curve with $p_a(X)=g$ and  $\eta\in \mbox{Pic}^0(X)$ is a line bundle such that $\eta_{\PP^1}=\OO_{\PP^1}(1)$ and $\eta_C^{\otimes 2}=\OO_C(-x-y)$. In this case, the choice of the homomorphism $\beta$ is uniquely determined by $X$ and $\eta$. Therefore, we drop $\beta$ from the notation of such a Prym curve.

There are similar decompositions of the pull-backs $\pi^*([\Delta_j])$ of the other boundary divisors $\Delta_j\subset \mm_g$ for $1\leq j\leq \lfloor \frac{g}{2}\rfloor$, see again \cite{FL} Section 1 for details.

\vskip 4pt

Via Nikulin surfaces we construct a sweeping curve for the divisor $\Delta^{'}_0\subset \rr_8$. Let us start with a general element of $\Delta_0^{'}$ corresponding to a smooth $2$-pointed curve $[C,x,y]\in \cM_{7,2}$ and a $2$-torsion point $\eta\in \mbox{Pic}^0(C_{xy})[2]$ and set $\eta_C:=\nu^*(\eta)\in \mbox{Pic}^0(C)[2]$. Using \cite{FV} Theorem 0.2, there exists a Nikulin surface $f:\widetilde{S}\rightarrow S$ branched along $8$ rational curves $N_1, \ldots, N_8\subset S$ and an embedding $C\subset S$, such that $C\cdot N_i=0$ for $i=1, \ldots, 8$ and $\eta_C=e_C$, where $e\in \mbox{Pic}(S)$ is the even class with $e^{\otimes 2}=\OO_S(N_1+\cdots+N_8)$. We can also assume that $\mbox{Pic}(S)=\Lambda_7$. By moving $C$ in its linear system on $S$, we may assume that $x,y\notin N_1\cup \ldots \cup N_8$, and we set $\{x_1, x_2\}=f^{-1}(x)$ and $\{y_1,y_2\}=f^{-1}(y)$.

\vskip 5pt

We pick a Lefschetz pencil  $\Lambda:=\{C_t\}_{t\in \PP^1}$ consisting of curves on $S$ passing through the points $x$ and $y$. Since the locus $\bigl\{D\in |\OO_S(C)|: D\supset N_i\bigr\}$ is a hyperplane in $|\OO_S(C)|$, it follows that there are precisely eight distinct values $t_1, \ldots, t_8\in \PP^1$ such that
$$C_{t_i}=:C_i=N_i+D_i,$$
where $D_i$ is a smooth curve of genus $6$ which contains $x$ and $y$ and intersects $N_i$ transversally at two points.  For each $t\in \PP^1-\{t_1, \ldots, t_8\}$, we may assume that $C_t$ is a smooth curve and denoting $[\bar{C}_t:=C_t/x\sim y]\in \mm_8$, we have an exact sequence
$$0\longrightarrow \mathbb Z_2 \longrightarrow \mbox{Pic}^0(\bar{C}_t)[2]\longrightarrow \mbox{Pic}^0(C_t)[2]\longrightarrow 0.$$
In particular, there exist two distinct line bundles $\eta_t^{'}, \eta_t^{''}\in \mbox{Pic}^0(\bar{C}_t)$  such that
$$\nu_t^*(\eta_t^{'})=\nu_t^*(\eta_t^{''})=e_{C_t}.$$
Using the Nikulin surfaces, we can consistently distinguish $\eta_t^{'}$ from $\eta_t^{''}$. Precisely, $\eta_t^{'}$ corresponds to the admissible cover
$$f^{-1}(C_t)/x_1\sim y_1, x_2\sim y_2 \stackrel{2:1}\longrightarrow \bar{C}_t$$
whereas $\eta_t^{''}$ corresponds to the admissible cover
$$f^{-1}(C_t)/x_1\sim y_2, x_2\sim y_1 \stackrel{2:1}\longrightarrow \bar{C}_t.$$

\vskip 5pt

First we construct the pencil $R:=\{\bar{C}_t\}_{t\in \PP^1}\hookrightarrow \mm_8$. Formally, we have a fibration $u:\mbox{Bl}_{2g-2}(S)\rightarrow \PP^1$ induced by the pencil $\Lambda$ by blowing-up $S$ at its $2g-2$ base points (two of which being $x$ and $y$ respectively), which comes endowed with sections $E_x$ and $E_y$ given by the corresponding exceptional divisors.
The pencil $R$ is obtained from $u$, by identifying inside the surface $\mbox{Bl}_{2g-2}(S)$ the sections $E_x$ and $E_y$ respectively.

\begin{lemma}\label{int11}
The pencil $R\subset \mm_8$ has the following numerical characters:
$$R\cdot \lambda=g+1=8, \ \ R\cdot \delta_0=6g+16=58, \ \mbox{ and } \  R\cdot \delta_j=0 \ \  \mbox{ for } j=1, \ldots, 4.$$
\end{lemma}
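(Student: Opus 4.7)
My plan is to factor $R$ as $R = \xi \circ \tilde R$, where $\tilde R : \PP^1 \to \mm_{7,2}$ is the natural lift of the pencil recording the two sections $E_x, E_y$, and $\xi : \mm_{7,2} \to \mm_8$ is the clutching morphism identifying the two marked points. Since the Hodge bundle pulls back as $\xi^*\lambda = \lambda$, we have $R\cdot \lambda = \tilde R\cdot \lambda$, which is easiest to compute on the smooth total space $Y := \mathrm{Bl}_{12}(S)$ of the pencil on $S$. Applying Leray's spectral sequence to $\pi : Y \to \PP^1$ together with relative duality $R^1\pi_*\OO_Y \cong (\pi_*\omega_\pi)^\vee$, and using $\chi(\OO_Y) = 2$, yields $R\cdot \lambda = \deg(\pi_*\omega_\pi) = g+1 = 8$.

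For the boundary classes, the cleanest route is via Mumford's relation $12\lambda = \kappa + \delta$ on $\mm_8$. Since $\omega_S = \OO_S$, the relative dualizing sheaf on $Y$ is $\omega_\pi = \sum_{i=1}^{12}E_i + 2F$, where $F$ is a general fiber. Using $E_i^2 = -1$, $F\cdot E_i = 1$, disjointness of the $E_i$, and $F^2 = 0$, one computes $c_1(\omega_\pi)^2 = -12 + 48 = 36$ and $\omega_\pi\cdot E_i = 1$ for each $i$. Passing to the family $\bar\pi : \bar{\mathcal{X}} := Y/(E_x\sim E_y)\to \PP^1$, the normalization $\nu : Y\to \bar{\mathcal{X}}$ is birational with conductor $E_x + E_y$, hence $\nu^*\omega_{\bar\pi} = \omega_\pi(E_x+E_y)$. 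The projection formula then gives
$$\kappa\cdot R = (c_1(\omega_\pi) + E_x + E_y)^2 = 36 + 2(1+1) - 2 = 38,$$
using $(E_x+E_y)^2 = -2$. Plugging into Mumford, $\delta\cdot R = 96 - 38 = 58$.

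Finally I would verify that no fiber of $\bar\pi$ lies in a higher boundary stratum $\Delta_j$ with $j\geq 1$. The generic fiber is irreducible nodal; the Lefschetz fibers of $\Lambda$ (whose number one may count as $44$ from a topological Euler-characteristic computation on $Y$, but this is not required for the lemma) become irreducible with two non-separating nodes; and each reducible fiber $\bar D_t \cup N_i$ carries three non-separating nodes — the two transverse intersections $N_i\cap \bar D_t$, together with the self-node of $\bar D_t$ induced by $x\sim y$. Hence $\delta_j\cdot R = 0$ for $j\geq 1$ and therefore $\delta_0\cdot R = 58$, which is the claimed $6g+16$ with $g=7$.

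The chief technical subtlety is the pullback of the relative dualizing sheaf along the normalization of the non-normal total space $\bar{\mathcal{X}}$: the correction $(E_x+E_y)^2 = -2$ is precisely what distinguishes the answer $58$ from the naive count $44 + 16 = 60$ of isolated nodes in the fibers of the $g=7$ family on $S$.
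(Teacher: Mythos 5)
Your proof is correct and reaches the right numbers, but it takes a genuinely different route from the paper. You invoke Mumford's relation $12\lambda = \kappa + \delta$ and compute $\kappa\cdot R = \omega_{\bar\pi}^2$ directly on the non-normal total space $\bar{\mathcal{X}}$, pulling $\omega_{\bar\pi}$ back to the smooth normalization $Y$ via $\nu^*\omega_{\bar\pi} = \omega_\pi(E_x+E_y)$, getting $\kappa\cdot R = 36 + 4 - 2 = 38$ and hence $\delta\cdot R = 96 - 38 = 58$. The paper instead computes $R\cdot\delta_0$ directly, without any reference to $\kappa$: it writes $(R\cdot\delta_0)_{\mm_8} = (\Lambda\cdot\delta_0)_{\mm_7} + E_x^2 + E_y^2 = 60 - 2 = 58$, where the first term is the classical count $6g + 18$ of singular fibres (with multiplicity) of a pencil of genus-$g$ curves on a $K3$, and $E_x^2 + E_y^2 = -\psi_x - \psi_y$ is the degree of the normal bundle $N_{\Delta_0}$ along the pencil $\tilde R \subset \mm_{7,2}$. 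The two calculations are formally equivalent once one unwinds Noether's formula on the blown-up $K3$, but the paper's is shorter because it sidesteps the delicate point you correctly flag — the normalization correction on $\bar{\mathcal{X}}$ — by working with the divisor $\Delta_0$ on the moduli stack and its normal bundle rather than with $\kappa$ on the singular surface. Your fibre-type analysis for $\delta_j\cdot R = 0$, $j\geq 1$, is a welcome expansion of what the paper asserts in one line (and your count of $44$ irreducible nodal fibres versus $8$ reducible two-nodal ones, summing to $60$ nodes, is consistent with the formula). One small remark: your $\lambda$ computation via Leray on $Y$ computes $\Lambda\cdot\lambda$; to conclude $R\cdot\lambda = \Lambda\cdot\lambda$ you rely on $\xi^*\lambda = \lambda$ for the clutching map $\xi:\mm_{7,2}\to\mm_8$, which is true but worth stating explicitly, since it is the same equality the paper implicitly invokes when it writes $(R\cdot\lambda)_{\mm_8} = (\Lambda\cdot\lambda)_{\mm_7}$.
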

\begin{proof}
We observe that $(R\cdot \lambda)_{\mm_8}=(\Lambda \cdot \lambda)_{\mm_7}=g+1=8$ and $(R\cdot \delta_j)_{\mm_8}=(\Lambda \cdot \delta_j)_{\mm_7}=0$ for $j\geq 1$. Finally, in order to determine the degree of the normal bundle of $\Delta_0$ along $R$, we write:
$$(R\cdot \delta_0)_{\mm_8}=(\Lambda  \cdot \delta_0)_{\mm_7}+E_x^2+E_y^2=6g+18-2=58,$$
where we have used the well-known fact that a Lefschetz pencil of curves of genus $g$ on a $K3$ surface possesses $6g+18$ singular fibres (counted with their multiplicities) and that $E_x^2=E_y^2=-1$.
\end{proof}

\vskip 4pt

Next, note that the family of Prym curve 
$\Bigl\{[\bar{C_t}, \eta_t \bigr]: \nu_t^*(\eta_t)=e_{C_t}\Bigr\}_{t\in \PP^1}\hookrightarrow \rr_8$ splits into two irreducible components meeting in eight points. We consider
one of the irreducible components, say
$$\Gamma:=\Bigl\{[\bar{C}_t,\eta_t^{'}]\Bigr\}_{t\in \PP^1}\hookrightarrow \rr_8,$$
where the notation for $\eta_t^{'}$ has been explained above.

\begin{lemma}\label{int12}
The curve $\Gamma\subset \rr_8$ constructed above has the following numerical features:
$$\Gamma \cdot \lambda=8, \ \ \Gamma\cdot \delta_0^{'}=42, \ \ \Gamma\cdot \delta_0^{''}=0 \mbox{ and } \Gamma\cdot \delta_0^{\mathrm{ram}}=8.$$
Furthermore, $\Gamma$ is disjoint from all boundary components contained in $\pi^*(\Delta_j)$ for $j=1, \ldots, 4$.
\end{lemma}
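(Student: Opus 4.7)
The plan is to leverage Lemma \ref{int11} together with the forgetful morphism $\pi: \rr_8 \to \mm_8$ and the pull-back formula (\ref{pullbackrg}). First, since $\Gamma$ is by construction one of the two irreducible components of the natural 2:1 cover of $R$ parametrizing lifts of $e_{C_t}$, the restriction $\pi|_{\Gamma}: \Gamma \to R$ is birational, so $\pi_*[\Gamma] = [R]$. Using $\pi^*(\lambda) = \lambda$ and the projection formula, $\Gamma \cdot \lambda = R \cdot \lambda = 8$. For $j \geq 1$, Lemma \ref{int11} gives $R \cdot \delta_j = 0$, and inspection of the stable models shows $R \cap \Delta_j = \emptyset$ set-theoretically: the general fiber of $R$ is an irreducible nodal curve, and the special fibers $\bar C_{t_i}$ analyzed below remain irreducible after stabilization (the rational component $N_i$ contracts to a self-node on $D_i$). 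Hence $\Gamma$ is disjoint from every boundary divisor of $\rr_8$ lying over $\Delta_j$ for $j \geq 1$.

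The heart of the proof is the analysis at $\delta_0$. Applying $\pi^*(\delta_0) = \delta_0' + \delta_0'' + 2 \delta_0^{\mathrm{ram}}$ and the projection formula,
$$\Gamma \cdot \delta_0' \;+\; \Gamma \cdot \delta_0'' \;+\; 2\, \Gamma \cdot \delta_0^{\mathrm{ram}} \;=\; R \cdot \delta_0 \;=\; 58.$$
For general $t$, the Prym curve $[\bar C_t, \eta_t']$ has a single étale node $x \sim y$ and $\nu_t^*(\eta_t') = e_{C_t}$; since $\mbox{Pic}(S) = \Lambda_7$, the $2$-torsion class $e$ restricts non-trivially to every smooth $C_t \in |\OO_S(C)|$. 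Hence the generic point of $\Gamma$ lies in $\Delta_0' \setminus (\Delta_0'' \cup \Delta_0^{\mathrm{ram}})$, so $\Gamma \subset \Delta_0'$ while $\Gamma \not\subset \Delta_0'' \cup \Delta_0^{\mathrm{ram}}$.

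The eight distinguished points $t_1, \ldots, t_8$ correspond to the reducible fibers $C_{t_i} = N_i + D_i$. Because $f: \widetilde S \to S$ is branched precisely along $N_i$, the limiting admissible cover of $\bar C_{t_i}$ replaces $N_i$ by an exceptional rational curve (a $\PP^1$ that doubly covers $N_i$ with ramification over the two points of $D_i \cap N_i$), while the self-node $x \sim y$ on $D_i$ stays étale. The quasi-stable Prym curve therefore lies in $\Delta_0' \cap \Delta_0^{\mathrm{ram}}$ and not in $\Delta_0''$. Transversality of $\Gamma$ with $\Delta_0^{\mathrm{ram}}$ at each $t_i$ with multiplicity one follows from the fact that $N_i + |\OO_S(C - N_i)|$ is a hyperplane in $|\OO_S(C)| = \PP^7$, while $\Lambda$ is a line meeting this hyperplane transversally at a single point. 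Consequently, $\Gamma \cdot \delta_0^{\mathrm{ram}} = 8$, $\Gamma \cdot \delta_0'' = 0$, and so $\Gamma \cdot \delta_0' = 58 - 16 = 42$.

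The main subtlety is the local verification at each $t_i$: one must show that the admissible cover of $\bar C_{t_i}$ acquires exactly one exceptional component $N_i$ whose smoothing parameter matches the pencil parameter $t - t_i$, so that the intersection of $\Gamma$ with $\Delta_0^{\mathrm{ram}}$ is indeed transverse and contributes a single unit. This will follow from the explicit local model of the Nikulin double cover branched along $N_i$, together with the genericity of $\Lambda$ within $|\OO_S(C)|$.
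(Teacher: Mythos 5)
Your proposal is correct and follows essentially the same route as the paper: push forward to $\mm_8$ via the projection formula to get $\Gamma\cdot\lambda$ and $\Gamma\cdot\pi^*\delta_0=58$, identify the eight fibers $C_{t_i}=N_i+D_i$ (where the cover is branched) as the only contributions to $\delta_0^{\mathrm{ram}}$, rule out $\delta_0^{''}$ using $\mathrm{Pic}(S)=\Lambda_7$, and subtract. The paper phrases the $\delta_0^{''}=0$ step as ``$f^{-1}(C_t)$ cannot split'' rather than in terms of non-triviality of $e_{C_t}$, and you supply a slightly more explicit transversality argument at the $t_i$; but these are the same computation.
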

\begin{proof} First we observe that $\Gamma$ intersects the divisor $\Delta_0^{\mathrm{ram}}$ transversally at the points corresponding to the values $t_1, \ldots, t_8\in \PP^1$, when the curve $C_{i}$ acquires the $(-2)$-curve $N_i$ as a component. Indeed, for each of these points $e^{\otimes (-2)}_{D_i}=\OO_{D_i}(-N_i)$ and  $e^{\vee}_{N_i}=\OO_{N_i}(1)$, therefore $[C_i, e_{C_i}]\in \Delta_0^{\mathrm{ram}}$. Furthermore, using Lemma \ref{int11} we write $(\Gamma\cdot \lambda)_{\rr_8}=\pi_*(\Gamma)\cdot \lambda=8$ and
$$\Gamma\cdot (\delta_0^{'}+\delta_0^{''}+2\delta_0^{\mathrm{ram}})=\Gamma \cdot \pi^*(\delta_0)=R\cdot \delta_0=58.$$
Furthermore, for $t\in \PP^1-\{t_1, \ldots, t_8\}$, the curve $f^{-1}(C_t)$ cannot split into two components, else $\mbox{Pic}(S)\varsupsetneq \Lambda_7$. Therefore $\gamma\cdot \delta_0^{''}=0$ and hence $\Gamma\cdot \delta_0^{'}=42$.
\end{proof}

\vskip 4pt

\noindent \emph{Proof of Theorem \ref{r8}.} The curve $\Gamma\subset \rr_8$ constructed above is a sweeping curve for the irreducible boundary divisor $\Delta_0^{'}$, in particular it intersects non-negatively every irreducible effective divisor $D$ on $\rr_8$ which is different from $\Delta_0^{'}$. Since $\Gamma \cdot \delta_0^{'}>0$, it follows that $D$ intersects non-negatively \emph{every} pseudoeffective divisor on $\rr_8$. Using  the formula for the canonical divisor \cite{FL}
$$K_{\rr_8}=13\lambda-2(\delta_0^{'}+ \delta_0^{''})-3\delta_0^{\mathrm{ram}}-\cdots \in CH^1(\rr_8),$$ applying  Lemma \ref{int12} we obtain that $\Gamma \cdot K_{\rr_8}=-4<0$, thus $K_{\rr_8}\notin \mbox{Eff}(\rr_8)$. Using \cite{BDPP}, we conclude that $\rr_8$ is uniruled, in particular its Kodaira dimension is negative.
\hfill $\Box$

\vskip 5pt

\subsection{The uniruledness of the universal singular locus of the theta divisor over $\rr_8$.}\hfill

\vskip 4pt

In what follows, we sketch a second proof of Theorem \ref{r8}, skipping some details. This parametrization provides a \emph{concrete} way of constructing a rational curve through a general point of $\rr_8$. We fix a general element $[C, \eta]\in \cR_8$ and denote by $f:\widetilde{C}\rightarrow C$ the corresponding unramified double cover and by $\iota:\widetilde{C}\rightarrow \widetilde{C}$ the involution exchanging the sheets of $f$. Following \cite{W}, we consider the singular locus of the Prym theta divisor, that is,  the locus
$$V^3(C,\eta) =\mbox{Sing}(\Xi):= \bigl\{L\in \mathrm{Pic}^{14}(\widetilde{C}): \mathrm{Nm}_f(L) = K_C, \ h^0(C, L)\geq 4 \hbox{ and } h^0(C, L)\equiv 0 \ \mathrm{mod}\ 2\bigr\}.$$
It follows from \cite{W}, that $V^3(C, \eta)$ is a smooth curve. We pick a line bundle $L\in V^3(C, \eta)$ with $h^0(\widetilde{C}, L)=4$, a general point $\tilde{x}\in \widetilde{C}$ and consider the  $\iota$-invariant part of the Petri map, that is,
$$\mu_0^+\bigl(L(-\tilde{x})\bigr): \mathrm{Sym}^2 H^0(\widetilde{C}, L(-\tilde{x})) \rightarrow H^0(C, K_C(-x)),$$
$$\ \ \mbox{  }  s\otimes t+t\otimes s\mapsto s\cdot \iota^*(t)+t\cdot \iota^*(s),$$
where $x:=f(\tilde{x})\in C$.
We set $\PP^2:=\PP\bigl(H^0(L(-\tilde{x}))^{\vee}\bigr)$, and similarly to \cite{FV} Section 2.2, we consider the map $q:\PP^2\times \PP^2\rightarrow \PP^5$ obtained from the Segre embedding $\PP^2\times \PP^2\hookrightarrow \PP^8$ by projecting onto the space of symmetric tensors. We have the following commutative diagram:
\begin{figure}[h]
$$\xymatrix@R=11pt{\widetilde{C} \ar[rr]^-{\bigl(L(-\tilde{x}), \iota^{*}(L(-\tilde{x}))\bigr)} \ar[dd]_f && \PP^2\times \PP^2 \ar[dd]_q \ar@{^{}->}[rd] \\
&& & \PP^8=\PP\bigl(H^0 (L(-\tilde{x}))^{\vee}\otimes H^0(L(-\tilde{x}))^{\vee}\bigr) \ar@{-->}@<-1ex>[dl] \\
C\ar[rr]^-{\bigl|\mu_0^+(L(-\tilde{x}))\bigr|} && *!<-22pt,0pt>{\PP^5=\PP(\mathrm{Sym}^2 H^0\bigl(L(-\tilde{x})\bigr)^{\vee})}}
$$
\end{figure}

Let $\Sigma:=\mbox{Im}(q)\subset \PP^5$ be the determinantal cubic surface; its singular locus is the Veronese surface $V_4$. For a general choice of $[C, \eta]\in \cR_8, L\in V^3(C,\eta)$ and of $\tilde{x}\in \widetilde{C}$, the map
$\mu_0^+(L(-\tilde{x}))$ is injective and let $W\subset H^0(C, K_C(-x))$ be its $6$-dimensional image. Comparing dimensions, we observe that the kernel of the multiplication map
$$\mbox{Sym}^2(W)\longrightarrow H^0(C, K_C^{\otimes 2}(-2x))$$
is at least $2$-dimensional. In particular, there exist distinct quadrics $Q_1, Q_2\subset \PP^5$ such that
$$C\subset S:=Q_1\cap Q_2\cap \Sigma\subset \PP^5.$$
Since $\mbox{Sing}(\Sigma)=V_4$, the surface $S$ is singular at the $16$ points of intersection $Q_1\cap Q_2\cap V_4$, or equivalently, $\mbox{Sing}(S)\supseteq Q_1\cap Q_2\cap V_4$. 
Assume now, we can find $(C,L, \eta, \tilde{x})$ as above such that $S$ has no further singularities except the already exhibited $16$ points, that is, 
 $$\mbox{Sing}(S)=Q_1\cap Q_2\cap V_4.$$
We obtain that $S$ is a $16$-nodal canonical surface, that is, $K_S=\OO_S(1)$.

\vskip 4pt

Using the exact sequence $0\rightarrow H^0(S,\OO_S)\rightarrow H^0(S, \OO_S(C))\rightarrow H^0(\OO_C(C))\rightarrow 0$, since $\OO_C(C)=\OO_C(x)$, we find that
$\mbox{dim } |\OO_S(C)|=1$, that is, $C$ moves on $S$. Moreover the pencil $|\OO_S(C)|$ has $x\in S$ as a base point.

\vskip 3pt

We consider the surface $\widetilde{S}:=q^{-1}(S)\subset \PP^2\times \PP^2$. For each curve $C_t\in |\OO_S(C)|$, we denote by $\widetilde{C}_t:=q^{-1}(C_t)\subset \widetilde{S}$ the corresponding double cover. Furthermore, we define a line bundle $L_t\in \mbox{Pic}^{14}(\widetilde{C}_t)$, by setting $\OO_{\widetilde{C}_t}(1,0)=L_t(-\tilde{x})$ (in which case, $\OO_{\widetilde{C}_t}(0,1)=\iota^*(L_t(-\tilde{x}))$).

\vskip 4pt

The construction we just explained induces a uniruled parametrization of the universal singular locus of the Prym theta divisor in genus $8$ (which dominates $\cR_8$). Our result is conditional to a (very plausible) transversality assumption:

\begin{theorem}\label{r83}
Assume there exists $[C, \eta, L, x]$ as above,  such that $S=Q_1\cap Q_2\cap \Sigma\subset \PP^5$ is a $16$-nodal canonical surface. Then the moduli space
$$\cR_8^3:=\Bigl\{[C,\eta,L]: [C, \eta]\in \cR_8, \ L\in V^3(C,\eta)\Bigr\}$$ is uniruled.
\end{theorem}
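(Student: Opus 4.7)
The plan is to produce, through a general point of $\cR_8^3$, a non-constant rational curve contained in $\cR_8^3$; uniruledness then follows immediately.

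First, I would propagate the hypothesis from the single point guaranteed by the theorem to a dense open locus. Openness of (i) injectivity of $\mu_0^+(L(-\tilde x))$, (ii) the scheme $Q_1\cap Q_2\cap \Sigma$ having the expected dimension $2$, and (iii) the singular locus of this intersection coinciding with $Q_1\cap Q_2\cap V_4$ together ensure that, for a general $[C,\eta,L]\in \cR_8^3$ and general $\tilde x\in \widetilde C$, the associated surface $S\subset \PP^5$ is a $16$-nodal canonical surface with $K_S=\OO_S(1)$, and $C$ is contained in the smooth locus of $S$. Adjunction on the Gorenstein surface $S$, combined with $\OO_S(1)|_C=K_C(-x)$ of degree $13$ and $\deg K_C=14$, yields $C^2=1$, so $|\OO_S(C)|$ is a pencil $\{C_t\}_{t\in\PP^1}$ whose unique base point is $x$.

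Second, I would lift this pencil to $\cR_8^3$. For each smooth $C_t$, set $\widetilde C_t:=q^{-1}(C_t)\subset \widetilde S$; the involution of $\PP^2\times \PP^2$ exchanging factors restricts to an étale double cover $\widetilde C_t\to C_t$, inducing a Prym structure $\eta_t\in \mathrm{Pic}^0(C_t)[2]$. Choosing $\tilde x_t\in \widetilde C_t$ over $x$ continuously in $t$, set $L_t:=\OO_{\widetilde C_t}(1,0)(\tilde x_t)\in \mathrm{Pic}^{14}(\widetilde C_t)$. A direct computation using $\OO_{\widetilde C_t}(1,1)=f^*\OO_{C_t}(1)$ and the adjunction identity $K_{C_t}=\OO_{C_t}(1)\otimes \OO_{C_t}(x)$ gives $\mathrm{Nm}_f(L_t)=K_{C_t}$; the condition $h^0(L_t)\geq 4$ (together with the parity $h^0\equiv 0\pmod 2$) is preserved by deformation along the irreducible component of the relative $V^3$ over the pencil containing $L_0=L$, yielding $L_t\in V^3(C_t,\eta_t)$ for all $t$. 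This defines a morphism $\nu:\PP^1\to \cR_8^3$ sending $0$ to $[C,\eta,L]$.

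The main obstacle is verifying that $\nu$ is non-constant. Since $C^2=1>0$ on $S$, the pencil $|\OO_S(C)|$ is Lefschetz for general $S$, and a Noether-formula type count of its singular fibres (via the topological Euler characteristic of $S$, corrected for the $16$ nodes and the unique base point $x$) yields a strictly positive integer of nodal degenerations, ruling out isotriviality of $\{C_t\}$. Hence the moduli class of $C_t$ in $\cM_8$ varies with $t$, so $\nu$ is non-constant. As $[C,\eta,L]$ was a general point of $\cR_8^3$, rational curves of this form cover a dense open subset, establishing that $\cR_8^3$ is uniruled.
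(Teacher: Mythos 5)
Your proposal is correct and follows essentially the same approach as the paper: lift the pencil $|\OO_S(C)|$ on the $16$-nodal canonical surface $S$ to a rational curve in $\cR_8^3$ by means of the double cover $q^{-1}(S)\subset\PP^2\times\PP^2$, checking that the resulting line bundles $L_t$ stay in $V^3(C_t,\eta_t)$. The paper's own proof of Theorem~\ref{r83} is a single sentence deferring entirely to the construction described just before the statement; your version supplies detail the paper leaves implicit (propagation of the transversality hypothesis to a general $[C,\eta,L]$, and non-constancy of the induced rational curve via a singular-fibre count), though note that $C^2=1$ alone does not give $\dim|\OO_S(C)|=1$ — one still needs the short exact sequence $0\to\OO_S\to\OO_S(C)\to\OO_C(x)\to 0$ together with $h^1(\OO_S)=0$, as in the paper.
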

\begin{proof}
The assignment $\PP^1\ni t\mapsto [\widetilde{C}_t/C_t, L_t]\in \cR_8^3$ described above provides a rational curve passing through a general point of $\cR_8^3$.
\end{proof}


\end{document}